\documentclass[12pt,a4paper,twoside]{article}
\usepackage[hmarginratio=1:1,bmargin=1.3in]{geometry}
\usepackage[frenchb]{babel}
\usepackage[utf8]{inputenc}
\usepackage[OT2,T1]{fontenc}
\usepackage{amsmath, amsthm}
\usepackage{amsfonts}
\usepackage{amssymb}
\usepackage[all]{xy}
\usepackage{graphicx}
\usepackage[nottoc, notlof, notlot]{tocbibind}
\usepackage{array}
\usepackage{url}
\usepackage{rotating}
\usepackage{fancyhdr}
\usepackage{fancyhdr}
\usepackage{titlesec}
\usepackage{xfrac} 
\pagestyle{fancy}

\fancyhead{}
\fancyhead[LE,RO]{\thepage} 
\fancyhead[LO]{\leftmark}
\fancyhead[RE]{Diego IZQUIERDO}
\fancyfoot{}

\titleformat{\section}[hang]{\center\Large\bf}{\thesection.}{0.5cm}{}

\DeclareSymbolFont{cyrletters}{OT2}{wncyr}{m}{n}
\DeclareMathSymbol{\Sha}{\mathalpha}{cyrletters}{"58}
\DeclareMathSymbol{\Brusse}{\mathalpha}{cyrletters}{"42}
\setlength{\parindent}{0pt}

\addto\captionsfrenchb{}

\theoremstyle{plain}
\newtheorem{theorem}{Th\'eor\`eme}[section]
\newtheorem{lemma}[theorem]{Lemme}
\newtheorem{proposition}[theorem]{Proposition}
\newtheorem{corollary}[theorem]{Corollaire}
\newtheorem{definition}[theorem]{D\'efinition}
\newtheorem{thmx}{Théorème}

\theoremstyle{definition}
\newtheorem{remarque}[theorem]{Remarque}

\newtheorem{notation}[theorem]{Notation}
\newtheorem{question}[theorem]{Question}

\newtheoremstyle{hypo}  
  {\topsep}   
  {\topsep}   
  {\itshape}  
  {1.5ex}       
  {\bfseries} 
  {)}         
  {8pt plus 1pt minus 1pt}  
  {}          
\theoremstyle{hypo}

\newtheoremstyle{hypol}  
  {\topsep}   
  {\topsep}   
  {\itshape}  
  {1.5ex}       
  {\bfseries} 
  {)$_{\ell}$}         
  {8pt plus 1pt minus 1pt}  
  {}          
\theoremstyle{hypol}

\newtheoremstyle{DP}  
  {\topsep}   
  {\topsep}   
  {\itshape}  
  {}       
  {\bfseries} 
  {.}         
  {8pt plus 1pt minus 1pt}  
  {}          
\theoremstyle{DP}

\begin{document}

\title{\textbf{\scshape Autour d'une conjecture de Kato et Kuzumaki}}

\author{Diego Izquierdo\\
\small
École Normale Supérieure\\
\small
45, Rue d'Ulm - 75005 Paris - France\\
\small
\texttt{diego.izquierdo@ens.fr}
}
\date{}
\normalsize
\maketitle

\small
\textbf{Résumé.} En 1986, Kato et Kuzumaki ont formulé des conjectures cherchant à donner une caractérisation diophantienne de la dimension cohomologique des corps. Dans cet article, nous montrons d'abord un énoncé de type principe local-global dans ce contexte pour les corps de nombres. Cela nous permet de donner une nouvelle démonstration d'une des conjectures de Kato et Kuzumaki pour les corps de nombres totalement imaginaires (la première preuve étant due à Olivier Wittenberg). Nos arguments nous permettent aussi d'obtenir des résultats pour les corps globaux de caractéristique positive. Dans la suite de l'article, nous établissons toutes les conjectures de Kato et Kuzumaki pour les corps $\mathbb{C}(x_1,...,x_n)$ et $\mathbb{C}(x_1,...,x_n)((t))$. Nous montrons finalement un résultat partiel pour le corps de séries de Laurent à deux variables $\mathbb{C}((x,y))$. \\

\textbf{Abstract.} In 1986, Kato and Kuzumaki stated several conjectures in order to give a diophantine characterization of cohomological dimension of fields. In this article, we first prove a local-global principle in this context for number fields. This allows us to give a new proof of one of Kato and Kuzumaki's conjectures for totally imaginary number fields (the first proof was given by Olivier Wittenberg). Our arguments can be generalized to get results for global fields of positive characteristic. We then establish all the conjectures for the fields $\mathbb{C}(x_1,...,x_n)$ and $\mathbb{C}(x_1,...,x_n)((t))$. We finally prove a partial result for the field of Laurent series in two variables $\mathbb{C}((x,y))$.
\normalsize

\section*{Introduction}

\hspace{3ex} En 1986, dans l'article \cite{KK}, Kato et Kuzumaki émettent des conjectures dont le but est de donner une caractérisation diophantienne de la dimension cohomologique des corps. Pour ce faire, ils introduisent des variantes des propriétés $C_i$ des corps qui concernent la $K$-théorie de Milnor et les hypersurfaces projectives de petit degré, et espèrent qu'elles caractérisent les corps de petite dimension cohomologique.\\

\hspace{3ex} Plus précisément, donnons-nous un corps $L$ et deux entiers naturels $q$ et $i$. On note $K_q^M(L)$ le $q$-ième groupe de $K$-théorie de Milnor de $L$. Pour chaque extension finie $L'$ de $L$, on dispose d'un morphisme norme $N_{L'/L}: K_q^M(L')\rightarrow K_q^M(L)$ (section 1.7 de \cite{Kat}). Ainsi, lorsque $Z$ est un schéma de type fini sur $L$, on peut introduire le sous-groupe $N_q(Z/L)$ de $K_q^M(L)$ engendré par les images des applications norme $N_{L'/L}$ lorsque $L'$ décrit les extensions finies de $L$ telles que $Z(L') \neq \emptyset$. On dit alors que le corps $L$ est $C_i^q$ si, pour tout $n\geq 1$, pour toute extension finie $L'$ de $L$ et pour toute hypersurface $Z$ de $\mathbb{P}^n_{L'}$ de degré $d$ avec $d^i\leq n$, on a $N_q(Z/L') = K_q^M(L')$. Par exemple, le corps $L$ est $C_i^0$ si, pour toute extension $L'$ de $L$, toute hypersurface $Z$ de $\mathbb{P}^n_{L'}$ de degré $d$ avec $d^i\leq n$ a un 0-cycle de degré 1. Le corps $L$ est $C_0^q$ si, pour toute tour d'extensions finies $L''/L'/L$, l'application norme $N_{L''/L'}: K_q^M(L'')\rightarrow K_q^M(L')$ est surjective.\\

\hspace{3ex} Kato et Kuzumaki conjecturent que, pour $i \geq 0$ et $q\geq 0$, un corps parfait est $C_i^q$ si, et seulement si, il est de dimension cohomologique au plus $i+q$. Comme cela a déjà été remarqué à la fin de l'article \cite{KK}, la conjecture de Bloch-Kato (qui a été établie par Rost et Voevodsky) montre qu'un corps est $C_0^q$ si, et seulement si, il est de dimension cohomologique au plus $q$. Par contre, la conjecture de Kato et Kuzumaki est fausse en toute généralité. Par exemple, Merkurjev exhibe dans \cite{Mer} un corps de caractéristique nulle de dimension cohomologique $2$ ne vérifiant pas la propriété $C^0_2$. De même, Colliot-Thélène et Madore exhibent dans \cite{CTM} un corps de caractéristique nulle de dimension cohomologique $1$ ne vérifiant pas la propriété $C^0_1$. Ces contre-exemples sont tous construits par une méthode de récurrence transfinie due à Merkurjev et Suslin.  La conjecture de Kato et Kuzumaki reste donc entièrement ouverte pour les corps qui interviennent usuellement en arithmétique ou en géométrie algébrique.\\

\hspace{3ex} Ce n'est que très récemment que cette question a connu des avancées importantes. En effet, c'est en 2015 que Wittenberg a démontré dans \cite{Wit} que les corps $p$-adiques, le corps $\mathbb{C}((t_1))((t_2))$ et les corps de nombres totalement imaginaires vérifient tous la propriété $C_1^1$. Sa méthode consiste à introduire et démontrer pour ces corps une propriété plus forte que la propriété $C_1^1$: plus précisément, il dit qu'un corps $L$ est $C^1_1$-fort si, pour toute extension finie $L'$ de $L$, tout $L'$-schéma propre $Z$ et tout faisceau cohérent $E$ sur $Z$, la caractéristique d'Euler-Poincaré $\chi(X,E)$ annule le groupe abélien $K_q^M(L')/N_q(Z/L')$. Il se trouve que cette notion se prête beaucoup mieux aux dévissages que la propriété $C_1^1$ de Kato et Kuzumaki: cela permet à Wittenberg d'employer les méthodes qui avaient été développées antérieurement dans \cite{ELW}. À la fin de son article, Wittenberg se demande si le corps des séries de Laurent à deux variables $\mathbb{C}((x,y))$ et le corps de fractions rationnelles $\mathbb{C}(x,y)$ vérifient aussi la propriété $C_1^1$: avec $\mathbb{C}(x)((y))$ et $\mathbb{C}((x))(y)$, ce sont les cas les plus simples de corps de dimension cohomologique 2 pour lesquels cette propriété n'est pas connue.\\

\hspace{3ex} Le présent article est structuré en six parties pouvant être lues de manière très indépendante et qui portent sur les conjectures de Kato et Kuzumaki pour différents corps. Dans la première section, nous nous penchons sur les corps de nombres, sur lesquels nous établissons un énoncé de type principe local-global qui n'était connu auparavant que pour les variétés projectives, lisses, géométriquement irréductibles (voir le théorème 4 de \cite{KS}) ou pour les variétés propres de caractéristique d'Euler-Poincaré égale à 1 (proposition 6.2 de \cite{Wit}):

\begin{thmx} \emph{(Théorème \ref{locglob})}\\\label{A}
Soient $K$ un corps de nombres et $\Omega_K$ l'ensemble des places de $K$. Soit $Z$ un $K$-schéma propre contenant un fermé géométriquement intègre. Pour $v\in \Omega_K$, on note $K_v$ le complété de $K$ en $v$ et $Z_v$ le $K_v$-schéma $Z \times_K K_v$. Alors:
$$\text{Ker}\left( K^{\times}/N_1(Z/K) \rightarrow \prod_{v \in \Omega_K} K_v^{\times}/N_1(Z_v/K_v) \right)=0.$$
\end{thmx}

Ce théorème nous permet ensuite de déduire une nouvelle preuve, différente de celle de \cite{Wit}, du fait que les corps de nombres totalement imaginaires vérifient la propriété $C_1^1$. S'il est vrai que cette preuve utilise la propriété $C_1^1$ des corps $p$-adiques (corollaire 5.5 de \cite{Wit}), le passage des résultats locaux aux résultats globaux est nettement plus simple et explicite que dans la section 6 de \cite{Wit}. \\

\hspace{3ex} La deuxième section est consacrée aux corps globaux de caractéristique positive $p>0$. Nous démontrons sur de tels corps un principe local-global similaire à celui du théorème \ref{A}:

\begin{thmx} \emph{(Théorème \ref{carp})}\\\label{B}
Soient $K$ le corps des fonctions d'une courbe sur un corps fini de caractéristique $p>0$ et $\Omega_K$ l'ensemble des places de $K$. Soit $Z$ un $K$-schéma propre contenant un fermé géométriquement irréductible. Pour $v\in \Omega_K$, on note $K_v$ le complété de $K$ en $v$ et $Z_v$ le $K_v$-schéma $Z \times_K K_v$. On note $N_1^s(Z/K)$ le sous-groupe de $K^{\times}$ engendré par les images des normes $N_{L_s/K}: L_s^{\times} \rightarrow K^{\times}$ où $L$ décrit les extensions finies de $K$ telles que $Z(L) \neq \emptyset$ et $L_s$ désigne la clôture séparable de $K$ dans $L$. Alors:
$$\text{Ker}\left( K^{\times}/N_1^s(Z/K) \rightarrow \prod_{v \in \Omega_K} K_v^{\times}/N_1^s(Z_v/K_v) \right)=0.$$
\end{thmx}

Comme pour les corps de nombres totalement imaginaires, cela nous permet de déduire ensuite une preuve de la propriété $C_1^1$ « hors de $p$ » pour les corps globaux de caractéristique positive.\\

\hspace{3ex} Dans la troisième partie, nous établissons les conjectures de Kato et Kuzumaki pour les corps de fonctions de variétés sur un corps algébriquement clos de caractéristique nulle:

\begin{thmx} \emph{(Théorème \ref{tresglobal})}\\\label{C}
Soit $k$ un corps algébriquement clos de caractéristique 0. Alors le corps $k(t_1,...,t_n)$ est un corps $C_i^q$ pour tous $i\geq 0$ et $q \geq 0$ tels que $i+q=n$.
\end{thmx}

En particulier, cela établit que le corps $\mathbb{C}(x,y)$ est $C_1^1$, ce qui répond affirmativement à la question (3) du paragraphe 7.3 de \cite{Wit}. Un tel résultat ne peut pas être obtenu par les méthodes développées par Wittenberg puisque le corps $\mathbb{C}(x,y)$ ne vérifie pas la propriété $C^1_1$-forte (remarque 7.6 de \cite{Wit}).\\

\hspace{3ex} Dans la quatrième partie, nous démontrons les propriétés de Kato et Kuzumaki pour les corps de valuation discrète complets dont le corps résiduel est le corps des fonctions d'une variété sur un corps algébriquement clos de caractéristique nulle:

\begin{thmx} \emph{(Théorème \ref{c(x)((t))})}\\\label{D}
Soit $k$ un corps algébriquement clos de caractéristique nulle. Alors le corps complet  $k(t_1,...,t_{n-1})((t))$ est un corps $C_i^q$ pour tous $i\geq 0$ et $q \geq 0$ tels que $i+q=n$.
\end{thmx}

En particulier, cela prouve que $\mathbb{C}(x)((t))$ est $C_1^1$. Encore une fois, un tel résultat ne peut pas être obtenu par les méthodes développées par Wittenberg puisque le corps $\mathbb{C}(x)((t))$ ne vérifie pas la propriété $C^1_1$-forte (remarque 7.6 de \cite{Wit}). \\

\hspace{3ex} Dans la cinquième partie, nous étudions les corps $\mathbb{C}((t_1))...((t_{m-1}))((x,y))$ et $\mathbb{C}((t_1))...((t_{m}))(x)$ et nous démontrons le théorème suivant:

\begin{thmx} \emph{(Théorème \ref{diff})}\\\label{E}
Soient $k$ un corps algébriquement clos de caractéristique nulle et $m, i, q$ des entiers naturels tels que $i+q=m+1$. Soient $L$ le corps des fonctions d'une courbe projective lisse géométriquement intègre sur $k((t_1))...((t_m))$ ou une extension finie de $k((t_1))...((t_{m-1}))((x,y))$. Soit $Z$ une hypersurface de degré $d$ dans $\mathbb{P}^n_L$ avec $d^i \leq n$. On suppose $d$ premier. Alors:
\begin{itemize}
\item[(i)] Il existe des extensions finies $L_1$, ..., $L_r$ de $L$ telles que $Z(L_s)\neq \emptyset$ pour chaque $s$ et le groupe $N_q(Z/L)$ est engendré par les sous-groupes $N_{L_i/L}(K_q^M(L_s))$ pour $s = 1,...,r$.
\item[(ii)] Le groupe $K_q^M(L)/N_q(Z/L)$ est fini.
\end{itemize}
\end{thmx}

En particulier, le théorème s'applique à $\mathbb{C}((t))(x)$ et à $\mathbb{C}((x,y))$ en prenant $m=1$ et $i=q=1$. Sa preuve utilise notamment certains résultats de \cite{Izq1} et \cite{Izq2}. On ne sait toujours pas si le corps $L$ du théorème est $C_i^q$.\\

\hspace{3ex} Dans la sixième et dernière partie, nous posons un certain nombre de questions et proposons plusieurs pistes pour aller au-delà du présent article.

\subsection*{Rappels}

\hspace{3ex} Soient $L$ un corps quelconque et $q\geq 0$ un entier. Le $q$-ième groupe de $K$-théorie de Milnor de $L$ est par définition le groupe $K_0^M(K)=\mathbb{Z}$ si $q =0$ et:
$$K_q^M(L):= \underbrace{L^{\times} \otimes_{\mathbb{Z}} ... \otimes_{\mathbb{Z}} L^{\times}}_{\text{$q$ fois}} / \left\langle x_1 \otimes ... \otimes x_q | \exists i,j, i\neq j, x_i+x_j=1 \right\rangle$$
si $q>0$. Pour $x_1,...,x_q \in L^{\times}$, le symbole $\{x_1,...,x_q\}$ désigne la classe de $x_1 \otimes ... \otimes x_q$ dans $K_q^M(L)$. Plus généralement, pour $r$ et $s$ des entiers naturels tels que $r+s=q$, on dispose d'un accouplement naturel:
$$K_r^M(L) \times K_s^M(L) \rightarrow K_q^M(L)$$
que l'on notera $\{\cdot, \cdot\}$.\\

\hspace{3ex} Lorsque $L'$ est une extension finie de $L$, on dispose d'une application norme $N_{L'/L}: K_q^M(L') \rightarrow K_q^M(L)$ (section 1.7 de \cite{Kat}) vérifiant les propriétés suivantes:
\begin{itemize}
\item[$\bullet$] pour $q=0$, l'application $N_{L'/L}: K_0^M(L') \rightarrow K_0^M(L)$ est la multiplication par $[L':L]$;
\item[$\bullet$] pour $q=1$, l'application $N_{L'/L}: K_1^M(L') \rightarrow K_1^M(L)$ coïncide avec la norme usuelle $L'^{\times} \rightarrow L^{\times}$;
\item[$\bullet$] si $r$ et $s$ sont des entiers naturels tels que $r+s=q$, on a $N_{L'/L}(\{x,y\})=\{x,N_{L'/L}(y)\}$ pour $x \in K_q^M(L)$ et $y\in K_q^M(L')$;
\item[$\bullet$] si $L''$ est une extension finie de $L'$, on a $N_{L''/L} = N_{L'/L} \circ N_{L''/L'}$.
\end{itemize}
\vspace{13pt}
\hspace{3ex} Pour chaque $L$-schéma de type fini, on note $N_q(Z/L)$ le sous-groupe de $K_q^M(L)$ engendré par les images de $N_{L'/L}: K_q^M(L') \rightarrow K_q^M(L)$ pour $L'$ décrivant les extensions finies de $L$ telles que $Z(L')\neq \emptyset$. En particulier, $N_0(Z/L)$ est le sous-groupe de $\mathbb{Z}$ engendré par l'indice de $Z$ (ie le pgcd des degrés $[L':L]$ pour $L'$ extension finie de $L$ telle que $Z(L') \neq \emptyset$). Pour $i\geq 0$, on dit que $L$ est un corps $C_i^q$ si, pour toute extension finie $L'$ de $L$ et pour toute hypersurface $Z$ de $\mathbb{P}^n_{L'}$ de degré $d$ avec $d^i \leq n$, on a $N_q(Z/L')=K_q^M(L')$. En particulier, $L$ est $C_i^0$ si, pour toute extension finie $L'$ de $L$, toute hypersurface $Z$ de $\mathbb{P}^n_{L'}$ de degré $d$ avec $d^i \leq n$ est d'indice 1. \\

\hspace{3ex} Le corps $L$ est $C_0^q$ si, pour toute tour d'extensions finies $L''/L'/L$, la norme $N_{L''/L'}: K_q^M(L'')\rightarrow K_q^M(L')$ est surjective. Comme cela a été remarqué par Kato et Kuzumaki à la fin de \cite{KK}, en tenant compte de la conjecture de Bloch-Kato qui permet d'identifier les groupes $K_q^M(L)/n$ et $ H^q(L,\mu_n^{\otimes q})$ pour $n$ premier à la caractéristique de $L$ et qui a été prouvée par Rost et Voevodsky, on peut montrer qu'un corps de caractéristique nulle est $C_0^q$ si, et seulement si, il est de dimension cohomologique au plus $q$.

\subsection*{Remerciements}

\hspace{3ex} Je tiens à remercier très chaleureusement David Harari et Olivier Wittenberg pour des discussions très intéressantes et enrichissantes, ainsi que pour de nombreuses suggestions ayant permis d'améliorer ce texte. Je voudrais aussi remercier Yong Hu pour plusieurs échanges intéressants. Je voudrais finalement remercier l'École Normale Supérieure pour les excellentes conditions de travail.

\section{Corps de nombres}\label{s1}

\hspace{3ex} Le résultat principal de cette section est le théorème suivant:

\begin{theorem}\label{locglob}
Soient $K$ un corps de nombres et $\Omega_K$ l'ensemble des places de $K$. Soit $Z$ un $K$-schéma propre contenant un fermé géométriquement intègre. Pour $v\in \Omega_K$, on note $K_v$ le complété de $K$ en $v$ et $Z_v$ le $K_v$-schéma $Z \times_K K_v$. Alors:
$$\text{Ker}\left( K^{\times}/N_1(Z/K) \rightarrow \prod_{v \in \Omega_K} K_v^{\times}/N_1(Z_v/K_v) \right)=0.$$
\end{theorem}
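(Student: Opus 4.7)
Le plan est de ramener la situation au cas d'une variété lisse, projective et géométriquement intègre, déjà traité par le théorème~4 de \cite{KS}. L'ingrédient géométrique clé est le fermé géométriquement intègre $Y \subset Z$ fourni par l'hypothèse : il servira à la fois à assurer l'existence de points locaux presque partout et à produire, via ses points fermés, une famille suffisamment riche d'extensions globales de $K$ sur lesquelles $Z$ acquiert des points.

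Tout d'abord, grâce au lemme de Chow et à la résolution des singularités (on est en caractéristique nulle), on construit un $K$-schéma $\tilde{Y}$ lisse, projectif et géométriquement intègre, muni d'un morphisme propre $\tilde{Y} \to Z$ se factorisant à travers $Y$. Tout point de $\tilde{Y}$ à valeurs dans une extension finie $L$ de $K$ donne alors un point de $Z$ à valeurs dans $L$, d'où l'inclusion $N_1(\tilde{Y}/K) \subset N_1(Z/K)$, et de même après passage à $K_v$. Il suffit donc de montrer que $a$ appartient à $N_1(\tilde{Y}/K)$, ce qui, par le théorème~4 de \cite{KS} appliqué à $\tilde{Y}$, se ramène à vérifier $a \in N_1(\tilde{Y}_v/K_v)$ pour toute place $v$ de $K$.

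Comme $\tilde{Y}$ est géométriquement intègre, les estimations de Lang-Weil combinées au lemme de Hensel entraînent que $\tilde{Y}(K_v) \neq \emptyset$, et donc $N_1(\tilde{Y}_v/K_v) = K_v^\times$, pour presque toute place $v$. Seul reste à traiter un ensemble fini $S$ de places, en chacune desquelles l'hypothèse fournit une décomposition $a = \prod_i N_{L_{v,i}/K_v}(x_{v,i})$ avec $Z(L_{v,i}) \neq \emptyset$. L'étape cruciale consiste à modifier cette décomposition, quitte à multiplier $a$ par un élément de $N_1(\tilde{Y}/K)$, afin que les extensions invoquées soient de la forme $K(y) \otimes_K K_v$ pour $y$ point fermé de $\tilde{Y}$ convenablement choisi, ce qui entraînera $a \in N_1(\tilde{Y}_v/K_v)$ pour tout $v \in S$.

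L'obstacle principal se situe précisément à ce transfert entre $Z_v$ et $\tilde{Y}_v$ : un point de $Z_v$ à valeurs dans $L_{v,i}$ n'a a priori aucune raison de provenir de $\tilde{Y}_v$. Je m'attends à devoir combiner le principe de Hasse pour les extensions cycliques (afin de contrôler les normes locales à coût global raisonnable), un lemme à la Krasner (pour réaliser les extensions locales requises comme complétés des corps résiduels de points fermés de $\tilde{Y}$), et un argument d'approximation faible ou d'irréductibilité à la Hilbert sur $\tilde{Y}$ (pour satisfaire simultanément toutes les contraintes en les places de $S$). C'est à ce stade que l'intégralité géométrique de $Y$, en garantissant la densité des points fermés de $\tilde{Y}$ à corps résiduels de comportement local varié, jouera un rôle décisif.
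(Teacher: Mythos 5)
Votre r\'eduction \`a la sous-vari\'et\'e $\tilde{Y}$ lisse, projective et g\'eom\'etriquement int\`egre bute sur une lacune r\'edhibitoire, que vous identifiez vous-m\^eme comme \og l'obstacle principal \fg{} : le transfert de $N_1(Z_v/K_v)$ vers $N_1(\tilde{Y}_v/K_v)$. Le probl\`eme n'est pas seulement technique, il est structurel, car votre plan revient \`a \'etablir l'\'enonc\'e strictement plus fort \og $a\in N_1(\tilde{Y}/K)$ \fg{}, qui est faux en g\'en\'eral. Prenez $K=\mathbb{Q}$, $Y=C$ la conique d'\'equation $x^2+y^2+z^2=0$ (les extensions d\'eployant $C$ sont totalement imaginaires, d'o\`u $N_1(C/\mathbb{Q})=\mathbb{Q}_{>0}$), et $Z=C\sqcup \mathrm{Spec}\,\mathbb{Q}(\sqrt{2})$ : c'est bien un $\mathbb{Q}$-sch\'ema propre contenant un ferm\'e g\'eom\'etriquement int\`egre. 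L'\'el\'ement $a=-1=N_{\mathbb{Q}(\sqrt{2})/\mathbb{Q}}(1+\sqrt{2})$ appartient \`a $N_1(Z/\mathbb{Q})$, donc au noyau consid\'er\'e, mais $-1\notin N_1(C_{\mathbb{R}}/\mathbb{R})=\mathbb{R}_{>0}$ : la condition locale que vous voulez v\'erifier \'echoue d\'ej\`a \`a la place r\'eelle, et multiplier $a$ par un \'el\'ement (positif) de $N_1(C/\mathbb{Q})$ n'y change rien. Aucune combinaison de Krasner, du principe de Hasse cyclique ou d'irr\'eductibilit\'e \`a la Hilbert ne peut donc combler cette \'etape : les extensions locales $L_{v,i}$ qui d\'eploient $Z$ sans d\'eployer $Y$ portent une information qu'on ne peut pas jeter, et le th\'eor\`eme 4 de \cite{KS} appliqu\'e \`a $\tilde{Y}$ ne voit que $N_1(\tilde{Y}/K)$, en g\'en\'eral strictement plus petit que $N_1(Z/K)$.

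La preuve du texte contourne pr\'ecis\'ement cette difficult\'e en conservant toutes les extensions locales : par le th\'eor\`eme d'approximation de Greenberg, chaque extension $M_i^{(v)}$ de $K_v$ avec $Z(M_i^{(v)})\neq\emptyset$ se descend en une extension globale finie $L_i^{(v)}$ de $K$ d\'eployant $Z$, de sorte que la classe de $a$ vit dans $\Sha^1(K,T)$ pour le tore multinorme $T=R^1_{E/K}(\mathbb{G}_m)$ avec $E=L\times\prod_{v,i}L_i^{(v)}$. Le ferm\'e g\'eom\'etriquement int\`egre ne sert alors qu'\`a une seule chose, proche de ce que vous aviez en t\^ete avec Bertini et Hilbert : produire \emph{une} extension suppl\'ementaire $F_0$ telle que $Z(F_0)\neq\emptyset$ et $F_0\cap F=K$, o\`u $F$ est la cl\^oture galoisienne des autres facteurs. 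Le th\'eor\`eme 1 de \cite{DW} donne alors l'annulation de $\Sha^1$ pour le tore multinorme augment\'e du facteur $F_0$, d'o\`u $a\in N_1(Z/K)$. C'est cette disjonction lin\'eaire d'un unique facteur auxiliaire, et non une r\'eduction compl\`ete \`a $\tilde{Y}$, qui fait fonctionner l'argument.
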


\begin{notation}
Lorsque $M$ est un module galoisien sur $K$, on notera:
$$\Sha^1(K,M) := \text{Ker} \left( H^1(K,M) \rightarrow \prod_{v\in \Omega_K} H^1(K_v,M) \right) .$$
\end{notation}

\begin{proof} Dans la suite, nous fixons une clôture algébrique $\overline{K}$ de $K$: toutes les extensions finies de $K$ seront vues à l'intérieur de $\overline{K}$.\\
Si $Z(K) \neq \emptyset$, le résultat est évident. Nous supposons désormais que $Z(K) = \emptyset$, et nous fixons $x \in K^{\times}$ dont la classe modulo $N_1(Z/K)$ est dans
$$\text{Ker}\left( K^{\times}/N_1(Z/K) \rightarrow \prod_{v \in \Omega_K} K_v^{\times}/N_1(Z_v/K_v) \right).$$
Nous voulons montrer que $x \in N_1(Z/K)$. Pour ce faire, considérons une extension finie galoisienne $L$ de $K$ telle que $Z(L) \neq \emptyset$. Soit $S \subseteq \Omega_K$ l'ensemble des places $v$ de $K$ vérifiant l'une des affirmations suivantes:
\begin{itemize}
\item[(i)] $v$ est finie et l'extension $L/K$ est ramifiée en $v$;
\item[(ii)] $v$ est finie et $x$ n'est pas une unité dans $\mathcal{O}_v$;
\item[(iii)] $v$ est infinie.
\end{itemize}
Bien sûr, $S$ est une partie finie de $\Omega_K$. Soit $v \in \Omega_K$. Deux cas se présentent:
\begin{itemize}
\item[$\bullet$] Supposons d'abord que $v \in \Omega_K \setminus S$.  Dans ce cas, $v$ est une place finie, et comme l'extension $L_v/K_v$ est non ramifiée, on sait que $N_{L_v/K_v}(L_v^{\times})$ contient $\mathcal{O}_v^{\times}$. De plus, $x\in \mathcal{O}_v^{\times}$. On en déduit que $x\in N_{L_v/K_v}(L_v^{\times})$. 
\item[$\bullet$] Supposons maintenant que $v \in S$ et fixons une clôture algébrique $\overline{K_v}$ de $K_v$. On sait que $x \in N_1(Z_v/K_v)$ par hypothèse. Soient alors $M_1^{(v)},...,M_{n_v}^{(v)}$ des extensions finies de $K_v$ contenues dans $\overline{K_v}$ telles que $x \in \prod_{i=1}^{n_v} N_{M_i^{(v)}/K_v}({M_i^{(v)}}^{\times})$ et $Z(M_i^{(v)}) \neq \emptyset$ pour chaque $i$. D'après le théorème d'approximation de Greenberg, on sait que $Z(M_i^{(v)} \cap \overline{K}) \neq \emptyset$. On note alors $L_i^{(v)}$ une extension finie de $K$ contenue dans $M_i^{(v)} \cap \overline{K}$ telle que $Z(L_i^{(v)}) \neq \emptyset$. On a bien sûr:
$$x \in \prod_{i=1}^{n_v} N_{L_i^{(v)} \otimes_{K} K_v/K_v}((L_i^{(v)} \otimes_{K} K_v)^{\times}).$$
\end{itemize}
Nous avons donc montré que, si $T$ est le tore normique $R^{1}_{E/K}(\mathbb{G}_m)$ avec $E=L \times \prod_{v\in S}\prod_{i=1}^{n_v} L_i^{(v)}$ et si $[x]$ désigne l'image de $x$ dans $$H^1(K,T) = K^{\times}/ N_{E/K}(E^{\times}),$$ alors:
\begin{equation}\label{un}
[x] \in \Sha^1(K,T).
\end{equation}

Considérons maintenant la plus petite extension finie galoisienne $F$ de $K$ contenant $L$ et tous les $L_i^{(v)}$, et montrons que $Z$ possède un point dans une extension finie $F_0$ de $K$ telle que $F_0 \cap F =K$.\\
 
Soit $Y$ une sous-variété géométriquement intègre de $Z$. Comme $Z$ n'a pas de points rationnels, la dimension de $Y$ est au moins 1. Le théorème de Bertini appliqué à un ouvert dense et quasi-projectif de $Y$ implique alors que $Y$ contient une courbe $C$ quasi-projective géométriquement intègre sur $K$. Soient $C'$ une courbe de $\mathbb{P}^2_K$ birationnellement équivalente à $C$ et $g \in K[X,Y,Z]$ un polynôme homogène (absolument irréductible) tel que $C'$ est la courbe d'équation $g=0$. Soit $U'$ un ouvert non vide de $C'$ isomorphe à un ouvert de $C$. Supposons sans perte de généralité que $g \not\in K[Y,Z]$ et considérons l'ensemble:
$$H:= \{(y,z) \in F^2 | g(X,y,z)\in F[X] \text{ est irréductible}\}.$$
C'est un sous-ensemble Hilbertien de $F^2$ car $g$ est irréductible sur $F$. D'après le corollaire 12.2.3 de \cite{FA}, $H$ contient un sous-ensemble Hilbertien $H'$ de $K^2$. Comme $K$ est un corps Hilbertien, $H'$ est infini, et donc il existe une infinité de couples $(y,z) \in K^2$ tels que $g(X,y,z)$ est irréductible sur $F$. Chacun de ces couples correspond à un point $y \in (C')^{(1)}$ tel que $K(y) \cap F = K$. Comme $C' \setminus U'$ est fini, on déduit qu'il existe $y \in (U')^{(1)}$ tel que $K(y)$ est une extension finie de $K$ vérifiant $K(y) \cap F =K$. En posant $F_0 = K(y)$, on obtient que $Z(F_0)\neq \emptyset$ et $F_0 \cap F =K$.\\

D'après le théorème 1 de \cite{DW}, on a:
$$\Sha^1(K,Q) = 0$$
où $Q$ est le tore normique $R^{1}_{E'/K}(\mathbb{G}_m)$, avec $E'=L \times F_0 \times \prod_{v\in S}\prod_{i=1}^{n_v} L_i^{(v)}$. En combinant cela avec (\ref{un}), on voit que la classe de $x$ dans $H^1(K,Q)$ est triviale, et donc que $x \in N_1(Z/K)$.
\end{proof}

\begin{corollary}\label{prem}
Soient $K$ un corps de nombres et $\Omega_K$ l'ensemble des places de $K$. Soit $Z$ un $K$-schéma propre. Pour $v\in \Omega_K$, on note $K_v$ le complété de $K$ en $v$ et $Z_v$ le $K_v$-schéma $Z \times_K K_v$. On suppose qu'il existe des extensions finies $K_1,...,K_r$ de $K$ telles que $Z_{K_i}$ contient un fermé géométriquement intègre pour chaque $i$ et les degrés $[K_i:K]$ sont premiers entre eux dans leur ensemble. Alors:
$$\text{Ker}\left( K^{\times}/N_1(Z/K) \rightarrow \prod_{v \in \Omega_K} K_v^{\times}/N_1(Z_v/K_v) \right)=0.$$
\end{corollary}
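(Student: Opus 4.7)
The plan is to reduce the corollary to Theorem \ref{locglob} by base-changing to each of the fields $K_i$, then to recombine the $r$ resulting conclusions using a Bezout argument enabled by the coprimality of the degrees $[K_i:K]$. Fix $x \in K^{\times}$ whose class lies in the kernel appearing in the statement; the goal is to show that $x \in N_1(Z/K)$.

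The first step is to verify that for each $i$, the image of $x$ in $K_i^{\times}$ again lies in the analogous local kernel
$$\text{Ker}\left( K_i^{\times}/N_1(Z_{K_i}/K_i) \rightarrow \prod_{w \in \Omega_{K_i}} K_{i,w}^{\times}/N_1(Z_{K_i,w}/K_{i,w}) \right).$$
For this it suffices to establish the general inclusion $N_1(Z_v/K_v) \subseteq N_1(Z_{K_i,w}/K_{i,w})$ whenever $w \in \Omega_{K_i}$ lies above $v \in \Omega_K$. If $M/K_v$ is a finite extension with $Z(M) \neq \emptyset$ and $y \in M^{\times}$, then $M \otimes_{K_v} K_{i,w}$ decomposes as a product $\prod_j N_j$ of finite extensions of $K_{i,w}$, each $N_j$ receiving a $K_v$-algebra map from $M$ and therefore satisfying $Z(N_j) \neq \emptyset$; the base-change identity $N_{M/K_v}(y) = N_{(M \otimes_{K_v} K_{i,w})/K_{i,w}}(y \otimes 1)$ then expresses $N_{M/K_v}(y) \in K_{i,w}^{\times}$ as a product of norms $N_{N_j/K_{i,w}}(y_j)$, as needed.

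Once this inclusion of norm subgroups is in place, Theorem \ref{locglob} applies directly to the proper $K_i$-scheme $Z_{K_i}$, which by hypothesis contains a geometrically integral closed subscheme; it yields $x \in N_1(Z_{K_i}/K_i)$ for each $i$. Transitivity of Milnor norms then gives $N_{K_i/K}(x) \in N_1(Z/K)$, and since the restriction of $N_{K_i/K}$ to $K^{\times} \subseteq K_i^{\times}$ is the $[K_i:K]$-th power map, we obtain $x^{[K_i:K]} \in N_1(Z/K)$ for every $i$. Because the degrees $[K_i:K]$ are globally coprime, Bezout furnishes integers $a_i$ with $\sum_i a_i [K_i:K] = 1$, and since $N_1(Z/K)$ is a subgroup of $K^{\times}$ we conclude that $x = \prod_i (x^{[K_i:K]})^{a_i} \in N_1(Z/K)$. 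The only potentially delicate point is the base-change identity for the norm, but this is purely formal, since $y \otimes 1$ acts on $M \otimes_{K_v} K_{i,w}$ by the same matrix as multiplication by $y$ on $M$ over $K_v$, now viewed with entries in $K_{i,w}$.
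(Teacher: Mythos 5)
Your proof is correct and follows essentially the same route as the paper: the paper likewise applies Theorem \ref{locglob} over each $K_i$ and then invokes a restriction-corestriction argument to conclude that the kernel is killed by each $[K_i:K]$, hence trivial by coprimality. Your write-up simply makes explicit the two halves of that argument (the compatibility of the local norm subgroups under base change $K_v \to K_{i,w}$, and the corestriction step $N_{K_i/K}(x)=x^{[K_i:K]} \in N_1(Z/K)$), both of which are handled correctly.
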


\begin{remarque}
Ce corollaire n'était connu auparavant que pour les $K$-variétés projectives, lisses, géométriquement irréductibles (théorème 4 de \cite{KS}) et pour les variétés propres de caractéristique d'Euler-Poincaré égale à 1 (proposition 6.2 de \cite{Wit}). Il généralise ces résultats d'après la proposition 3.3 de \cite{Wit}.
\end{remarque}

\begin{proof}
D'après le théorème \ref{locglob}, pour chaque $i$, on a:
$$\text{Ker}\left( K_i^{\times}/N_1(Z_{K_i}/K_i) \rightarrow \prod_{w \in \Omega_{K_i}} K_{i,w}^{\times}/N_1(Z_{K_{i,w}}/K_{i,w}) \right)=0.$$
Un argument de restriction-corestriction montre alors que le groupe $$\text{Ker}\left( K^{\times}/N_1(Z/K) \rightarrow \prod_{v \in \Omega_K} K_v^{\times}/N_1(Z_v/K_v) \right)$$
est de $[K_i:K]$-torsion pour chaque $i$. Il est donc nul.
\end{proof}

\hspace{3ex} Wittenberg a démontré assez récemment la propriété $C_1^1$ pour les corps de nombres totalement imaginaires (théorème 6.1 de \cite{Wit}). Le théorème \ref{locglob} nous permet de retrouver ce résultat par une méthode très différente. Le passage des résultats locaux aux résultats globaux est beaucoup plus explicite que dans la section 6 de \cite{Wit}.

\begin{corollary}\label{aze}
Soient $K$ un corps de nombres et $Z$ une hypersurface de degré $d$ de $\mathbb{P}^n_K$ telle que $d \leq n$ et $N_1(Z_v/K_v)=K_v^{\times}$ pour toute place réelle $v$ de $K$. Alors $N_1(Z/K)=K^{\times}$.
\end{corollary}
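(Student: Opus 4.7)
The plan is to reduce the global statement $N_1(Z/K)=K^\times$ to the local equalities $N_1(Z_v/K_v)=K_v^\times$ at every place of $K$, by invoking the local-global principle of Theorem \ref{locglob} (or, more generally, its Corollary \ref{prem}). Once the local conditions hold at all $v\in\Omega_K$ and the geometric-integrality hypothesis is verified, the kernel in the statement of the theorem vanishes and the conclusion follows immediately.

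I would first verify the local conditions place by place. At a complex place $v$, the base field $K_v=\mathbb{C}$ is algebraically closed, so any nonempty projective scheme over $\mathbb{C}$ has a $\mathbb{C}$-point and hence $N_1(Z_v/K_v)=K_v^\times$ automatically. At a real place, the equality is the very hypothesis of the corollary. At a finite place $v$, the completion $K_v$ is a $p$-adic field, and Wittenberg's Corollary 5.5 of \cite{Wit} establishes that such fields are $C_1^1$; applied to $Z_v\subseteq\mathbb{P}^n_{K_v}$, a hypersurface of degree $d\leq n$, this yields $N_1(Z_v/K_v)=K_v^\times$.

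It remains to verify the geometric-integrality hypothesis. If $Z$ already contains a geometrically integral closed subscheme defined over $K$---which is the case, for instance, when some $K$-irreducible component of the defining polynomial $f$ is absolutely irreducible---then Theorem \ref{locglob} applies directly. In general, one decomposes $Z=V(f)$ via its $K$-irreducible factorization and, for each non-absolutely-irreducible component $V(f_i)$, examines the Galois orbit of its absolutely irreducible geometric factors; the field of definition $L_i/K$ of such a factor provides an extension over which $Z_{L_i}$ contains a geometrically integral closed subscheme. Supplementing this collection, if necessary, with residue fields of suitably chosen closed points of $Z$, one obtains a family of finite extensions $K_1,\dots,K_r$ of $K$ of coprime degrees satisfying the hypothesis of Corollary \ref{prem}.

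The main obstacle is precisely this last step: arranging for the coprimality of the degrees $[K_i:K]$ when $Z$ is far from being geometrically integral. This requires a careful combinatorial analysis of the Galois action on the absolutely irreducible geometric components of $Z$ and, potentially, of closed points of $Z$ arising from intersections with linear subspaces of complementary dimension. Once such a family is produced, combining the local equalities verified above with Corollary \ref{prem} forces the global kernel to vanish, and hence $N_1(Z/K)=K^\times$, as claimed.
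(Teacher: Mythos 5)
Your overall strategy is the right one and matches the paper's: verify the local equalities at every place (complex places trivially, real places by hypothesis, finite places by Corollary 5.5 of \cite{Wit}) and then kill the global kernel via Corollary \ref{prem}. The local part of your argument is complete and correct.

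However, there is a genuine gap at the step you yourself flag as ``the main obstacle'': producing finite extensions $K_1,\dots,K_r$ of coprime degrees over which $Z$ acquires a geometrically integral closed subscheme. Your proposed route --- decomposing $V(f)$ into $K$-irreducible components, taking fields of definition of the absolutely irreducible geometric factors, and supplementing with residue fields of closed points --- does not yield the required coprimality in general: the fields of definition of the geometric components may all have degrees sharing a common prime factor, and the existence of closed points of complementary degree is essentially equivalent to the index-one statement one is trying to establish, so it cannot be assumed. The paper closes this gap with a completely different tool: since $Z$ is a hypersurface of degree $d\leq n$ in $\mathbb{P}^n$, one has $\chi(Z,\mathcal{O}_Z)=1$, and Proposition 3.3 of \cite{Wit} (a coherent Euler-characteristic argument) then guarantees, for every prime $\ell$, a finite extension of degree prime to $\ell$ over which $Z$ contains a geometrically integral closed subscheme; collecting such extensions for the primes dividing the degree of any one auxiliary extension gives the coprime family needed for Corollary \ref{prem}. (As the paper remarks, one could alternatively invoke Theorem 2 of \cite{Kol} together with Theorem \ref{locglob}.) Without an input of this kind --- which genuinely uses $d\leq n$ and not just the shape of the factorization of $f$ --- your argument does not go through.
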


\begin{proof}
On sait que $\chi(Z,\mathcal{O}_Z)=1$. La proposition 3.3 de \cite{Wit} montre alors que les hypothèses du corollaire \ref{prem} sont vérifiées. On en déduit que:
$$\text{Ker}\left( K^{\times}/N_1(Z/K) \rightarrow \prod_{v \in \Omega_K} K_v^{\times}/N_1(Z_v/K_v) \right)=0.$$
Or d'après le corollaire 5.5 de \cite{Wit}, on a $N_1(Z_v/K_v)=K_v^{\times}$ pour chaque place finie $v$ de $K$. Par hypothèse, on a aussi $N_1(Z_v/K_v)=K_v^{\times}$ pour chaque place infinie $v$ de $K$. On en déduit que $N_1(Z/K)=K^{\times}$.
\end{proof}

\begin{remarque}
Au lieu d'utiliser la proposition 3.3 de \cite{Wit} et le corollaire \ref{prem} pour montrer que
$$\text{Ker}\left( K^{\times}/N_1(Z/K) \rightarrow \prod_{v \in \Omega_K} K_v^{\times}/N_1(Z_v/K_v) \right)=0,$$
on aurait pu combiner le théorème 2 de \cite{Kol} et le théorème \ref{locglob}. La preuve de la proposition 3.3 de \cite{Wit} est néanmoins beaucoup plus élémentaire que  celle du théorème 2 de \cite{Kol}.
\end{remarque}

\section{Corps globaux de caractéristique positive}\label{s2}

\hspace{3ex} Dans cette section, nous nous intéressons aux corps globaux de caractéristique positive. Dans ce contexte, nous définissons des variantes du groupe $N_1(Z/K)$:

\begin{definition}
Soit $K$ un corps de caractéristique $p>0$. Soit $Z$ un $K$-schéma.
\begin{itemize}
\item[(i)] On note $N_1^p(Z/K)$ l'ensemble des $x \in K^{\times}$ tels qu'il existe un entier $r\geq 1$ vérifiant $x^{p^r}\in N_1(Z/K)$.
\item[(ii)] On note $N_1^s(Z/K)$ le sous-groupe de $K^{\times}$ engendré par les images de normes $N_{L_s/K}: L_s^{\times} \rightarrow K^{\times}$ où $L$ décrit les extensions finies de $K$ telles que $Z(L) \neq \emptyset$ et $L_s$ désigne la clôture séparable de $K$ dans $L$.
\end{itemize}
\end{definition}

Nous pouvons maintenant établir le pendant du théorème \ref{locglob} pour les corps globaux de caractéristique positive:

\begin{theorem}\label{carp}
Soient $K$ le corps des fonctions d'une courbe sur un corps fini de caractéristique $p>0$ et $\Omega_K$ l'ensemble des places de $K$. Soit $Z$ un $K$-schéma propre contenant un fermé géométriquement irréductible. Pour $v\in \Omega_K$, on note $K_v$ le complété de $K$ en $v$ et $Z_v$ le $K_v$-schéma $Z \times_K K_v$.
\begin{itemize}
\item[(i)] On a:
$$\text{Ker}\left( K^{\times}/N_1^s(Z/K) \rightarrow \prod_{v \in \Omega_K} K_v^{\times}/N_1^s(Z_v/K_v) \right)=0.$$
\item[(ii)] Le groupe:
$$\text{Ker}\left( K^{\times}/N_1(Z/K) \rightarrow \prod_{v \in \Omega_K} K_v^{\times}/N_1^p(Z_v/K_v) \right)$$
est de torsion $p$-primaire.
\end{itemize}
\end{theorem}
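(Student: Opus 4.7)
The plan is to adapt the six-step argument in the proof of Theorem \ref{locglob} to positive characteristic, systematically working with separable subextensions when forming auxiliary norm tori, and then to derive (ii) from (i) by bookkeeping the contribution of purely inseparable extensions to the norm maps.

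For (i), I would assume $Z(K)=\emptyset$ and fix $x\in K^\times$ in the local-global kernel for $N_1^s$. Passing from the geometrically irreducible closed subscheme $Y\subseteq Z$ to $Y_{\mathrm{red}}$ and absorbing any residual non-reducedness via a Frobenius twist produces a geometrically integral subvariety of $Z$, on which a Bertini argument in a large projective embedding yields a geometrically integral curve $C\subseteq Z$. Smooth closed points of $C$ have separable residue fields over $K$, and Hilbertianity of $K$ (corollaire 12.2.3 of \cite{FA}, applied to a separating variable of $C$) lets me prescribe linear disjointness from any given finite Galois extension; I thus obtain a finite separable Galois extension $L/K$ with $Z(L)\neq\emptyset$. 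Let $S\subseteq\Omega_K$ be the finite set of places ramifying in $L$ or at which $x$ is not a $v$-unit. For $v\notin S$, $L_v/K_v$ is unramified separable so $\mathcal{O}_v^\times\subseteq N_{L_v/K_v}(L_v^\times)\subseteq N_1^s(Z_v/K_v)$ and $x$ is handled. For $v\in S$, the hypothesis $x\in N_1^s(Z_v/K_v)$ produces finite extensions $M_i^{(v)}/K_v$ with $Z(M_i^{(v)})\neq\emptyset$; Greenberg's approximation theorem in equicharacteristic, combined with passage to the separable closure inside $\overline{K}$, then yields finite separable extensions $L_i^{(v)}/K$ with $Z(L_i^{(v)})\neq\emptyset$ whose base change to $K_v$ dominates the separable part $M_i^{(v),s}$. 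Setting $E=L\times\prod_{v\in S,\,i}L_i^{(v)}$ and $T=R^1_{E/K}(\mathbb{G}_m)$, the class of $x$ in $H^1(K,T)=K^\times/N_{E/K}(E^\times)$ lies in $\Sha^1(K,T)$. A further Bertini--Hilbert argument on $C$ produces a finite separable extension $F_0/K$ with $Z(F_0)\neq\emptyset$ and linearly disjoint from the Galois closure $F$ of $E$; the function-field analog of Theorem 1 of \cite{DW} then gives $\Sha^1(K,R^1_{E'/K}(\mathbb{G}_m))=0$ for $E'=E\times F_0$, and consequently $x\in N_1^s(Z/K)$.

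For (ii), I would reduce to (i). At places $v\notin S$ the previous paragraph gives $x\in N_1(Z_v/K_v)$ directly via the unramified separable extension $L_v/K_v$, so only the finite set $S$ can contribute $r_v>0$ with $x^{p^{r_v}}\in N_1(Z_v/K_v)$; taking $r=\max_{v\in S}r_v$, we have $x^{p^r}\in N_1(Z_v/K_v)\subseteq N_1^s(Z_v/K_v)$ for every $v\in\Omega_K$. Applying (i) to $x^{p^r}$ yields a decomposition $x^{p^r}=\prod_jN_{L_{j,s}/K}(y_j)$ for finitely many finite extensions $L_j/K$ with $Z(L_j)\neq\emptyset$. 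Using the identity $N_{L_j/K}(z)=N_{L_{j,s}/K}(z^{p^{e_{L_j}}})$ with $p^{e_{L_j}}=[L_j:L_{j,s}]$ and setting $e=\max_je_{L_j}$, the element $x^{p^{r+e}}$ equals $\prod_jN_{L_{j,s}/K}(y_j^{p^e})$, each factor of which lies in $N_{L_j/K}(L_j^\times)\subseteq N_1(Z/K)$; hence $x^{p^{r+e}}\in N_1(Z/K)$, which gives the $p$-primary torsion statement.

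The main obstacles I anticipate are the positive-characteristic analog of Greenberg's approximation theorem and the function-field analog of the Drakokhrust--Wang vanishing $\Sha^1(K,R^1_{E'/K}(\mathbb{G}_m))=0$; both extend from number fields to global function fields by standard techniques (henselization for Greenberg, restriction--corestriction plus Chebotarev density for $\Sha^1$-vanishing). A subtler issue is the passage from geometric irreducibility to geometric integrality in the Bertini--Hilbert step, which in characteristic $p$ may require a purely inseparable twist; this inseparability is precisely what motivates the replacement of $N_1$ by $N_1^s$ in (i) and the $p$-power torsion conclusion in (ii).
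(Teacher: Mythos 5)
Your overall strategy coincides with the paper's: place $x$ in $\Sha^1$ of a multinorm torus built from the \emph{separable closures} of splitting fields (via unramified norms off a finite set $S$ and Greenberg approximation at $v\in S$), produce by Bertini plus Hilbert irreducibility an auxiliary splitting field linearly disjoint from the Galois closure $F$ of everything already used, invoke le théorème 1 de \cite{DW} to kill $\Sha^1$ of the enlarged torus, and deduce (ii) from (i) by tracking purely inseparable degrees; your final descent $N_{L_j/K}(z)=N_{L_{j,s}/K}(z)^{p^{e_j}}$ is exactly the paper's. Two steps are however not correct as written, and the first one is precisely the crux of the characteristic-$p$ case.

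First, one cannot in general ``absorb the non-reducedness via a Frobenius twist'' to get a geometrically integral closed subscheme of $Z$ itself, nor a finite \emph{separable} extension $L/K$ (or $F_0/K$) with $Z(L)\neq\emptyset$: a geometrically irreducible reduced $K$-scheme only becomes integral after a purely inseparable base change $K'/K$, and the points one then produces live over extensions of $K'$, not separable extensions of $K$. The paper's resolution is to keep the curve merely geometrically irreducible, write its plane model as $g=g'^{p^s}$ with $g'$ defined over a purely inseparable $K'$ and separable in one variable, run the separable Hilbert-subset argument over $K'$ (which is still Hilbertian) to get $F_0=K'(y)$ with $Z(F_0)\neq\emptyset$ and $F_0\cap F=K$ --- so $F_0$ is separable over $K'$ but \emph{not} over $K$ --- and then feed only $F_{0,s}$ (and likewise $L_s$, $L_{i,s}^{(v)}$) into the torus $E'$; note also that the trivial case to exclude at the outset is $Z(K')\neq\emptyset$ for some purely inseparable $K'$ (which already gives $N_1^s(Z/K)=K^\times$), not merely $Z(K)\neq\emptyset$, and this stronger exclusion is what guarantees $\dim Y\geq 1$. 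Second, in (ii) the claim that $x\in N_1(Z_v/K_v)$ for $v\notin S$ is unjustified: the unramified argument only yields $x\in N_{(L_s)_v/K_v}\bigl((L_s)_v^{\times}\bigr)\subseteq N_1^s(Z_v/K_v)$, because the purely inseparable part of $L/L_s$ imposes a $p$-power on the $N_1$ side. This does not sink the argument --- what you need in order to apply (i) is a uniform $r$ with $x^{p^r}\in N_1^s(Z_v/K_v)$ for all $v$, and you do have that --- but the statement should be corrected.
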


\begin{notation}
On notera $K^s$ la clôture séparable de $K$. Lorsque $M$ est un module galoisien sur $K$, on notera:
$$\Sha^1(K,M) := \text{Ker} \left( H^1(K,M) \rightarrow \prod_{v\in \Omega_K} H^1(K_v,M) \right) .$$
\end{notation}

\begin{proof}
Montrons d'abord (i). S'il existe une extension finie purement inséparable $K'$ de $K$ telle que $Z(K') \neq \emptyset$, le résultat est évident. Nous supposons désormais que $Z(K') = \emptyset$ pour toute extension finie purement inséparable $K'$ de $K$, et nous fixons $x \in K^{\times}$ dont la classe modulo $N_1^s(Z/K)$ est dans
$$\text{Ker}\left( K^{\times}/N_1^s(Z/K) \rightarrow \prod_{v \in \Omega_K} K_v^{\times}/N_1^s(Z_v/K_v) \right).$$
Nous voulons montrer que $x \in N_1^s(Z/K)$. Pour ce faire, considérons une extension finie normale $L$ de $K$ telle que $Z(L) \neq \emptyset$. Soit $L_s$ la clôture séparable de $K$ dans $L$: c'est une extension finie galoisienne de $K$. Soit $S \subseteq \Omega_K$ l'ensemble des places $v$ de $K$ vérifiant l'une des affirmations suivantes:
\begin{itemize}
\item[(i)] l'extension $L_s/K$ est ramifiée en $v$.
\item[(ii)] $x$ n'est pas une unité dans $\mathcal{O}_v$.
\end{itemize}
Bien sûr, $S$ est une partie finie de $\Omega_K$. Soit $v \in \Omega_K$. Deux cas se présentent:

\begin{itemize}
\item[$\bullet$] Supposons d'abord que $v \in \Omega_K \setminus S$.  Comme l'extension ${L_s}_v/K_v$ est non ramifiée, on sait que $N_{{L_s}_v/K_v}({L_s}_v^{\times})$ contient $\mathcal{O}_v^{\times}$. De plus, $x\in \mathcal{O}_v^{\times}$. On en déduit que $x\in N_{{L_s}_v/K_v}(L_v^{\times})$.
\item[$\bullet$] 
Supposons maintenant que $v \in S$ et fixons une clôture algébrique $\overline{K_v}$ de $K_v$. On sait que $x \in N_1^s(Z_v/K_v)$ par hypothèse. Soient alors $M_1^{(v)},...,M_{n_v}^{(v)}$ des extensions finies de $K_v$ contenues dans $\overline{K_v}$ telles que, si $M_{i,s}^{(v)}$ désigne la clôture séparable de $K_v$ dans $M_i^{(v)}$, on a $x \in \prod_{i=1}^{n_v} N_{M_{i,s}^{(v)}/K_v}({M_{i,s}^{(v)}}^{\times})$ et $Z(M_i^{(v)}) \neq \emptyset$ pour chaque $i$. D'après le théorème d'approximation de Greenberg, on sait que $Z(M_i^{(v)} \cap \overline{K}) \neq \emptyset$. On note alors $L_i^{(v)}$ une extension finie de $K$ contenue dans $M_i^{(v)} \cap \overline{K}$ telle que $Z(L_i^{(v)}) \neq \emptyset$. Soit $L_{i,s}^{(v)}$ la clôture séparable de $K$ dans $L_i^{(v)}$. On a bien sûr:
$$x \in \prod_{i=1}^{n_v} N_{L_{i,s}^{(v)} \otimes_{K} K_v/K_v}((L_{i,s}^{(v)} \otimes_{K} K_v)^{\times}).$$
\end{itemize}

Nous avons donc montré que, si $T$ est le tore normique $R^{1}_{E/K}(\mathbb{G}_m)$ avec $E=L_s \times \prod_{v\in S}\prod_{i=1}^{n_v} L_{i,s}^{(v)}$ et si $[x]$ désigne l'image de $x$ dans $H^1(K,T)$, alors:
\begin{equation}\label{deux}
[x] \in \Sha^1(K,T).
\end{equation}

Considérons maintenant la plus petite extension finie galoisienne $F$ de $K$ contenant $L_s$ et tous les $L_{i,s}^{(v)}$, et montrons que $Z$ possède un point dans une extension finie $F_0$ de $K$ telle que $F_0 \cap F =K$.\\

Soit $Y$ un fermé réduit géométriquement irréductible de $Z$. Comme $Z(K') = \emptyset$ pour toute extension finie purement inséparable $K'/K$, la dimension de $Y$ est au moins 1. Le théorème de Bertini appliqué à un ouvert dense et quasi-projectif de $Y$ implique alors que $Y$ contient une courbe $C$ quasi-projective géométriquement irréductible sur $K$. Soient $C'$ une courbe de $\mathbb{P}^2_K$ birationnellement équivalente à $C$ et $g \in K[X,Y,Z]$ un polynôme homogène (irréductible sur $K^s$) tel que $C'$ est la courbe d'équation $g=0$. Soit $U'$ un ouvert non vide de $C'$ isomorphe à un ouvert de $C$. Soient $K'$ une extension finie purement inséparable de $K$ et $g'\in K'[X,Y,Z] \setminus K'[X^p,Y^p,Z^p]$ irréductible sur $K'^s$ tels que $g=g'^{p^s}$. Sans perte de généralité, supposons que $g'\in K'[X,Y,Z] \setminus K'[X^p,Y,Z]$. Soient $m \geq 1$ un entier premier avec $p$ et $h'\in K'[Y,Z]\setminus \{0\}$ tels que le coefficient de $X^m$ dans $g'$ soit $h'$. Considérons l'ensemble:
$$H:= \{(y,z) \in F'^2 | g'(X,y,z)\in F'[X] \text{ est irréductible}, h'(y,z)\neq 0\},$$
où $F' = K' \cdot F$. C'est un sous-ensemble Hilbertien séparable de $F'^2$ car $g'$ est irréductible sur $F'$ et séparable en $X$. D'après le corollaire 12.2.3 de \cite{FA}, $H$ contient un sous-ensemble Hilbertien séparable $H'$ de $K'^2$. Comme $K'$ est un corps Hilbertien, $H'$ est infini, et donc il existe une infinité de couples $(y,z) \in K'^2$ tels que $g(X,y,z)$ est irréductible sur $F'$ et $h'(y,z) \neq 0$. Chacun de ces couples correspond à un point $y \in (C'_{K'})^{(1)}$ tel que $K'(y) \cap F' = K'$ et l'extension $K'(y)/K'$ est séparable. Comme $C'_{K'} \setminus U'_{K'}$ est fini, on déduit qu'il existe $y \in (U'_{K'})^{(1)}$ tel que $K'(y)$ est une extension finie séparable de $K'$ telle que $K'(y) \cap F' =K'$. En posant $F_0 = K'(y)$, on obtient que $Z(F_0)\neq \emptyset$ et $F_0 \cap F =K$.\\

Notons $F_{0,s}$ la clôture séparable de $K$ dans $F_0$. D'après le théorème 1 de \cite{DW}, on a:
$$\Sha^1(K,Q) = 0$$
où $Q$ est le tore normique $R^{(1)}_{E'/K}(\mathbb{G}_m)$, avec $E'=L_s \times F_{0,s} \times \prod_{v\in S}\prod_{i=1}^{n_v} L_{i,s}^{(v)}$. En combinant cela avec (\ref{deux}), on voit que la classe de $x$ dans $H^1(K,Q)$ est triviale, ce qui achève la preuve de (i). \\

Déduisons maintenant (ii) de (i). Soit $x\in K^{\times}$ dont la classe modulo $N_1(Z/K)$ est dans:
$$\text{Ker}\left( K^{\times}/N_1(Z/K) \rightarrow \prod_{v \in \Omega_K} K_v^{\times}/N_1^p(Z_v/K_v) \right).$$
Par hypothèse, pour chaque $v\in \Omega_K$, il existe $r_v \geq 0$ tel que $x^{p^{r_v}} \in N_1(Z_v/K_v) \subseteq N_1^s(Z_v/K_v)$. De plus, il existe un entier $m\geq 0$ tel que $x^m \in N_1^s(Z_v/K_v)$ pour tout $v\in \Omega_K$. On en déduit qu'il existe $r\geq 0$ tel que $x^{p^r} \in N_1^s(Z_v/K_v)$ pour tout $v\in\Omega_K$. D'après (i), cela prouve que $x^{p^r}\in N_1^s(Z/K)$. Considérons donc des extensions finies $K_1,...,K_n$ de $K$ telles que, si $K_{i,s}$ désigne la clôture séparable de $K$ dans $K_i$, on a $x^{p^r}\in \prod_{i=1}^n N_{K_{i,s}/K}(K_{i,s}^{\times})$ et $Z(K_i) \neq \emptyset$ pour chaque $i$. Comme les degrés $[K_i:K_{i,s}]$ sont tous des puissances de $p$, cela prouve qu'il existe $r' \geq 0$ tel que $(x^{p^r})^{p^{r'}} \in  \prod_{i=1}^n N_{K_{i}/K}(K_{i}^{\times})$. On en déduit que $x^{p^{r+r'}}\in N_1(Z/K)$, ce qui établit (ii).
\end{proof}

\hspace{3ex} Nous sommes maintenant en mesure d'établir la propriété $C_1^1$ « hors de $p$ » pour les corps globaux de caractéristique $p$.

\begin{corollary}
Soient $K$ le corps des fonctions d'une courbe sur un corps fini de caractéristique $p>0$ et $Z$ une hypersurface de degré $d$ de $\mathbb{P}^n_K$ telle que $d \leq n$. Alors le groupe $K^{\times}/N_1(Z/K)$ est d'exposant une puissance de $p$.
\end{corollary}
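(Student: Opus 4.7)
The strategy is to reduce to Theorem \ref{carp}(ii) combined with a local vanishing at each place of $K$. Since $Z$ is a hypersurface of degree $d \leq n$ in $\mathbb{P}^n_K$, one has $\chi(Z,\mathcal{O}_Z) = 1$; by Proposition 3.3 of \cite{Wit}, there exist finite extensions $K_1, \ldots, K_r$ of $K$ whose degrees $[K_i:K]$ are globally coprime and such that each $Z_{K_i}$ contains a geometrically integral closed subscheme (and in particular a geometrically irreducible one). Applying Theorem \ref{carp}(ii) over each $K_i$ and running the restriction-corestriction argument from the proof of Corollary \ref{prem}, one deduces that
$$\text{Ker}\left( K^{\times}/N_1(Z/K) \longrightarrow \prod_{v \in \Omega_K} K_v^{\times}/N_1^p(Z_v/K_v) \right)$$
is of $p$-primary torsion.

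It then remains to show that, for every place $v \in \Omega_K$, the quotient $K_v^{\times}/N_1(Z_v/K_v)$ is of $p$-primary torsion, so that $N_1^p(Z_v/K_v) = K_v^{\times}$ and the target of the displayed map vanishes. Each $K_v$ is a complete discretely valued field with finite residue field (of characteristic $p$), and every finite field satisfies $C_1^1$; one then invokes the equal-characteristic analog of Corollary 5.5 of \cite{Wit}, which provides the $C_1^1$ property outside $p$ for such local fields and yields exactly the local statement required.

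Granting this local vanishing, the map $K^{\times}/N_1(Z/K) \to \prod_v K_v^{\times}/N_1^p(Z_v/K_v)$ has trivial target, so the whole group $K^{\times}/N_1(Z/K)$ coincides with its own kernel, and is therefore of $p$-primary torsion, which is the content of the corollary. The main obstacle is the local statement: Wittenberg's Corollary 5.5 of \cite{Wit} treats $p$-adic fields through a dévissage using $C_1^1$-strong together with coherent Euler characteristics, and one must verify that the same dévissage in equal characteristic $p$ goes through after localizing away from $p$; this loss of the $p$-part is precisely what produces the \og outside $p$ \fg{} qualification in the statement.
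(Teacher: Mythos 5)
Your route is genuinely different from the paper's. You transpose the number-field argument of Corollary \ref{aze} verbatim: coprime extensions via Proposition 3.3 de \cite{Wit}, the local-global statement of the Théorème \ref{carp}(ii) plus restriction-corestriction, and a local vanishing at every place. The paper instead runs the dévissage of la proposition 2.1 de \cite{Wit} with the property \og normal \fg{} and the invariant \og exposant de la partie première à $p$ \fg{}, passing to the algebraic closure $K'$ of $K$ in $K(Z)$ to secure geometric irreducibility and using $\chi_K(Z,\mathcal{O}_Z)=[K':K]\chi_{K'}(Z,\mathcal{O}_Z)$; this proves the stronger statement that $K$ is $C_1^1$-fort hors de $p$ (all proper schemes, all coherent sheaves), of which the corollary is the special case $\chi=1$. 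Your assembly is logically sound: granting the two inputs, the target of the map in Théorème \ref{carp}(ii) is trivial, the kernel is everything, and since $K^{\times}/N_1(Z/K)$ is killed by $[L:K]$ for any finite $L$ with $Z(L)\neq\emptyset$, \og de torsion $p$-primaire \fg{} does upgrade to \og d'exposant une puissance de $p$ \fg.

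There are, however, two points you cannot leave as stated. The first is the local input, which is the load-bearing step and which you explicitly defer (\og one must verify that the same dévissage in equal characteristic $p$ goes through \fg). This is not an open verification: le corollaire 4.7 de \cite{Wit} — not an \og analog \fg{} of Corollary 5.5, which concerns $p$-adic fields — asserts that the completions $K_v\cong\mathbb{F}_q((t))$ are $C^1_1$-forts hors de $p$; combined with $\chi(Z_v,\mathcal{O}_{Z_v})=1$ this gives precisely $N_1^p(Z_v/K_v)=K_v^{\times}$. You must cite that result rather than gesture at a dévissage to be redone. The second point is your appeal to la proposition 3.3 de \cite{Wit} over the imperfect field $K$: over such a field one should not expect \emph{geometrically integral} closed subschemes after coprime extensions, only geometrically irreducible ones (which is why the Théorème \ref{carp} is stated with \og géométriquement irréductible \fg{} and why the paper bypasses Proposition 3.3 entirely in positive characteristic, reducing to normal schemes by normalization). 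Your argument survives because the Théorème \ref{carp} only needs irreducibility, but you should verify that Proposition 3.3 de \cite{Wit} applies in this form over an imperfect base, or else route the reduction through the normalization as the paper does.
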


\begin{remarque}
Ce résultat n'est pas établi dans l'article \cite{Wit}. Il est possible que la méthode employée dans la preuve du théorème 6.1 dudit article s'adapte au cas des corps globaux de caractéristique positive en utilisant les résultats sur les altérations de de Jong. La preuve qui suit est probablement nettement plus simple.
\end{remarque}

\begin{proof}
Lorsque $A$ est un groupe abélien de torsion, on note $A\{p'\}$ le sous-groupe de $A$ constitué des éléments dont l'ordre est premier à $p$. Pour chaque $K$-schéma propre $Z$, on pose:
$$H_1(Z/K)=K^{\times}/N_1(Z/K)$$
et on note $n_Z$ l'exposant de $H_1(Z/K)\{p'\}$ si $Z$ est non vide ou 0 sinon. On dira que $Z$ possède la propriété P si $Z$ est normal. Nous allons vérifier les trois conditions de la proposition 2.1 de \cite{Wit}.

\begin{itemize}
\item[(1)] C'est évident, puisque qu'un morphisme de $K$-schémas propres $Y \rightarrow Z$ induit un morphisme surjectif $H_1(Y/K) \rightarrow H_1(Z/K)$.
\item[(2)] Soit $Z$ un $K$-schéma propre normal. Soit $K'$ la clôture algébrique de $K$ dans $K(Z)$. Alors $Z$ est muni d'une structure naturelle de $K'$-schéma propre géométriquement irréductible. D'après le théorème \ref{carp}: $$\text{Ker}\left( H_1(Z/K') \rightarrow \prod_{v \in \Omega_K} H_1(Z_v/K_v') \right)\{p'\}=0.$$ De plus, comme $K_v'$ est $C^1_1$-fort hors de $p$ pour chaque $v \in \Omega_K$ d'après le corollaire 4.7 de \cite{Wit}, le groupe $H_1(Z_v/K'_v)\{p'\}$ est tué par $\chi_{K'} (Z,\mathcal{O}_Z)$. On en déduit que $H_1(Z/K')\{p'\}$ est aussi tué par $\chi_{K'} (Z,\mathcal{O}_Z)$. Or $\chi_{K} (Z,\mathcal{O}_Z)=[K':K]\chi_{K'} (Z,\mathcal{O}_Z)$. Un argument de restriction-corestriction montre alors que $\chi_{K} (Z,\mathcal{O}_Z)$ tue $H_1(Z/K)\{p'\}$ et donc que $n_Z$ divise $\chi_{K} (Z,\mathcal{O}_Z)$.
\item[(3)] Il suffit de choisir le morphisme de normalisation.
\end{itemize}
La proposition 2.1 de \cite{Wit} montre alors que $K$ est $C^1_1$ hors de $p$.
\end{proof}

\section{Corps de fonctions de variétés sur un corps algébriquement clos}

\hspace{3ex} Dans cette section, nous allons établir les conjectures de Kato et Kuzumaki pour les corps de fonctions de variétés sur des corps algébriquement clos de caractéristique nulle. Nous avons déjà rappelé que la conjecture de Bloch-Kato implique qu'un corps de caractéristique 0 est $C_0^q$ si, et seulement si, il est de dimension cohomologique au plus $q$. La proposition qui suit est un cas particulier de ce résultat. Nous en donnons malgré tout une preuve élémentaire, dont les idées seront utiles par la suite pour démontrer les théorèmes \ref{tresglobal} et \ref{c(x)((t))}:

\begin{proposition}
Soit $k$ un corps algébriquement clos de caractéristique nulle. Alors le corps $K=k(t_1,...,t_q)$ est un corps $C_0^{q}$.
\end{proposition}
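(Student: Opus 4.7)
The plan is to proceed by induction on $q$. For $q=0$, since $k$ is algebraically closed it has no non-trivial finite extensions, so $C_0^0$ holds vacuously. For $q \geq 1$, set $F_0 = k(t_1, \ldots, t_{q-1})$ and $K = F_0(t_q)$, and assume inductively that every finite extension of $F_0$ is $C_0^{q-1}$. Fix a tower of finite extensions $L''/L'/K$; I want to show $N_{L''/L'} \colon K_q^M(L'') \to K_q^M(L')$ is surjective. Let $F'$ (resp.\ $F''$) be the algebraic closure of $F_0$ in $L'$ (resp.\ $L''$); these are finite extensions of $F_0$, and $L'$ (resp.\ $L''$) is the function field of a smooth projective geometrically integral curve $C'$ over $F'$ (resp.\ $C''$ over $F''$).

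The main technical input is the Bass--Tate / Milnor exact sequence for Milnor $K$-theory of the function field of a curve,
$$0 \to K_q^M(F') \to K_q^M(L') \xrightarrow{(\partial_x)_x} \bigoplus_{x \in (C')^{(1)}} K_{q-1}^M(F'(x)) \to 0,$$
together with the analogous sequence for $L''$ and the compatibility (via Weil reciprocity) of the norm $N_{L''/L'}$ with residues: on the $x$-component the induced map is the sum, over $y \in (C'')^{(1)}$ lying above $x$, of the norms $N_{F''(y)/F'(x)} \colon K_{q-1}^M(F''(y)) \to K_{q-1}^M(F'(x))$. Since each residue field $F'(x)$ is a finite extension of $F_0$, it is $C_0^{q-1}$ by the inductive hypothesis, so each of these residue-field norms is surjective. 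This lets me reduce the problem, modulo the subgroup $K_q^M(F')$, to verifying that every element of $K_q^M(L')$ with prescribed residues lies in the image of $N_{L''/L'}$, which follows by lifting residues through Bass--Tate on the $L''$ side and taking norms.

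It remains to handle the subgroup $K_q^M(F') \subseteq K_q^M(L')$. The plan is to reduce, via the projection formula applied to a chosen element of $L'^\times$ whose norm from $L''$ is controlled, to showing that $N_{F''/F'} \colon K_q^M(F'') \to K_q^M(F')$ is surjective. Combined with the elementary fact that $F'$ has transcendence degree $q-1$ over the algebraically closed field $k$, hence cohomological dimension at most $q-1$ by Tsen--Lang, this surjectivity is expected to follow from divisibility of $K_q^M(F')$ via a restriction--corestriction argument. The main obstacle is establishing this divisibility without invoking the full Bloch--Kato conjecture; the likely resolution is to combine Hilbert~90 for $K_1^M$ with the projection formula, iterating to reduce once again to norm surjectivity on $K_{q-1}^M$ of finite extensions of $F_0$, where the inductive hypothesis applies.
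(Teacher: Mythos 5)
Il y a une lacune sérieuse au c{\oe}ur de votre argument : la suite exacte de Bass--Tate--Milnor
$$0 \to K_q^M(F') \to K_q^M(L') \to \bigoplus_{x \in (C')^{(1)}} K_{q-1}^M(F'(x)) \to 0$$
n'est valable que pour le corps des fonctions \emph{rationnelles} $F'(t)$ (c'est-à-dire pour $C'=\mathbb{P}^1$, la somme portant sur les points de $\mathbb{A}^1$). Or votre $L'$ est une extension finie arbitraire de $K=F_0(t_q)$, donc $C'$ est une courbe de genre quelconque. Pour une telle courbe, (i) le noyau de l'application résidu contient strictement l'image de $K_q^M(F')$ en général, et (ii) l'application résidu est loin d'être surjective, même sur le sous-groupe découpé par la réciprocité de Weil : déjà pour $q=1$, prescrire les résidus revient à prescrire un diviseur de degré zéro comme diviseur d'une fonction, et l'obstruction est $\mathrm{Pic}^0(C'')$, non nul dès que le genre est positif. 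L'étape « lifting residues through Bass--Tate on the $L''$ side » échoue donc, et la réduction « modulo $K_q^M(F')$ » n'a pas de sens pour une courbe générale. Par ailleurs, l'étape finale (divisibilité de $K_q^M(F')$ pour $F'$ de degré de transcendance $q-1$) est laissée à l'état d'esquisse : elle découle de Bloch--Kato et de Tsen--Lang, mais la proposition entière en découle alors directement (comme le papier le signale), et la voie « élémentaire » que vous évoquez (Hilbert 90 plus formule de projection) n'est pas étayée.

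La preuve du papier contourne précisément ces difficultés en ne traitant que des symboles $\{u_1,\dots,u_{q+1}\}$ (qui engendrent $K_{q+1}^M(L_1)$, et dont l'image par la norme est un sous-groupe) et, surtout, en choisissant la fibration en courbes \emph{en fonction du symbole} : on prend une base de transcendance adaptée à $u_1,\dots,u_{q+1}$ de sorte que $u_1,\dots,u_q$ proviennent (après extension finie) de la base $M_1$ de degré de transcendance $q$, traitée par récurrence, tandis que $u_{q+1}$ est traité dans la direction de la courbe via le théorème de Tsen ($\overline{M_1}(C_1)$ est $C_1$, donc les normes y sont surjectives sur $K_1$) ; la formule de projection recolle les deux morceaux. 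Si vous voulez sauver votre approche par résidus, il faudrait soit vous ramener au cas $L'=F'(t)$ par un dévissage supplémentaire, soit adopter ce choix de fibration dépendant de l'élément à atteindre.
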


\begin{proof}
On procède par récurrence sur $q$. Le résultat est évident pour $q=0$. Supposons donc le résultat vrai pour un certain $q \geq 0$ et notons $K=k(t_1,...,t_{q+1})$. Soient $L_1$ une extension finie de $K$ et $L_2$ une extension finie de $L_1$. Soient $u_1,...,u_{q+1}$ des éléments de $L_1^{\times}$. Nous allons montrer que $\{u_1,...,u_{q+1}\} \in N_{L_2/L_1}(K_{q+1}^M(L_2))$.\\

Pour ce faire, construisons une famille $(w_1,...,w_s)$ d'éléments de $L_1^{\times}$ définie de la manière suivante:
\begin{itemize}
\item[$\bullet$] si $u_1,...,u_{q+1}$ ne sont pas algébriquement indépendants sur $k$, on se donne une base de transcendance $(v_1,...,v_r)$ de l'extension $L_1/k(u_1,...,u_{q+1})$ et on pose $(w_1,...,w_s)=(u_1,...,u_{q+1},v_1,...,v_{r-1})$. 
\item[$\bullet$] si $u_1,...,u_{q+1}$ sont algébriquement indépendants sur $k$, on pose $(w_1,...,w_s)=(u_1,...,u_q)$. 
\end{itemize}
Soit $M_1$ (resp. $M_2$) la clôture algébrique de $k(w_1,...,w_s)$ dans $L_1$ (resp. $L_2$). Soit $C_1$ (resp. $C_2$) une courbe géométriquement intègre sur $M_1$ (resp. $M_2$) telle que $M_1(C_1) = L_1$ (resp. $M_2(C_2) = L_2$). Comme $\overline{M_1}(C_1)$ est un corps $C_1$ et $\overline{M_2}(C_2)/\overline{M_1}(C_1)$ est une extension finie, on sait que $u_{q+1} \in N_{\overline{M_2}(C_2)/\overline{M_1}(C_1)}(\overline{M_2}(C_2)^{\times})$. Du coup, il existe une extension finie $F$ de $M_2$ et $y \in F(C_2)^{\times}$ tels que $u_{q+1} = N_{F(C_2)/F(C_1)}(y)$. De plus, par hypothèse de récurrence, $M_1$ est un corps $C_0^q$, et donc il existe $x \in K_{q}^M(F)$ tel que $\{u_1,...,u_{q}\} = N_{F/M_1}(x)$. Du coup, on a:
\begin{align*}
N_{L_2/L_1}(N_{F(C_2)/L_2}(\{x,y\}))&=N_{F(C_2)/M_1(C_1)}(\{x,y\})\\& = N_{F(C_1)/M_1(C_1)}(N_{F(C_2)/F(C_1)}(\{x,y\}))\\& = N_{F(C_1)/M_1(C_1)}(\{x,u_{q+1}\})\\&=\{u_1,...,u_{q+1}\}.
\end{align*}
On a donc montré que $\{u_1,...,u_{q+1}\} \in N_{L_2/L_1}(K_{q+1}^M(L_2))$, et donc le corps $K$ est bien $C_0^q$.
\end{proof}

\hspace{3ex} Nous pouvons maintenant établir les conjectures de Kato et Kuzumaki pour le corps des fonctions d'une variété sur un corps algébriquement clos de caractéristique nulle:

\begin{theorem} \label{tresglobal}
Soit $k$ un corps algébriquement clos de caractéristique nulle. Alors le corps $K=k(t_1,...,t_q)$ est un corps $C_i^{j}$ pour tous $i \geq 0$ et $j \geq 0$ tels que $i+j=q$.
\end{theorem}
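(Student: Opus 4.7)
The plan is to combine the previous proposition (which gives $C_0^q$) with the Tsen-Lang theorem on the $C_i$ property of function fields of $i$-dimensional varieties over algebraically closed base fields. The case $i=0$ is exactly the previous proposition, so I focus on $i\geq 1$. Fix a finite extension $L/K$, a hypersurface $Z\subseteq \mathbb{P}^n_L$ of degree $d$ with $d^i\leq n$, and elements $u_1,\ldots,u_j\in L^\times$; the goal is to show that $\{u_1,\ldots,u_j\}\in N_j(Z/L)$.

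I would first construct an intermediate subfield $M\subseteq L$ containing $u_1,\ldots,u_j$, algebraically closed in $L$, and of transcendence degree exactly $j$ over $k$. In the spirit of the previous proposition, start from $M_0=k(u_1,\ldots,u_j)$; if its transcendence degree is some $j'<j$, adjoin $j-j'$ elements of $L$ algebraically independent over $M_0$ (which exist since $\mathrm{trdeg}_k(L)=q\geq j$); then take the algebraic closure of the resulting field in $L$. The field $M$ obtained is finitely generated over $k$ of transcendence degree $j$, and is $C_0^j$ by the previous proposition (a property which passes to finite extensions). Since we are in characteristic zero and $M$ is algebraically closed in $L$, the extension $L/M$ is regular of transcendence degree $i$, so $L=M(X)$ for some geometrically integral $M$-variety $X$ of dimension $i$.

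The Tsen-Lang theorem then shows that $\overline{M}(X)$ is a $C_i$ field; hence the hypersurface $Z\times_L\overline{M}(X)$, of degree $d$ with $d^i\leq n$, has an $\overline{M}(X)$-rational point. Writing $\overline{M}(X)=\bigcup_{F/M\text{ finie}}F(X)$ (where $F\otimes_M L\cong F(X)$ is a field because $M$ is algebraically closed in $L$), this point descends to $F(X)$ for some finite extension $F/M$. Using that $M$ is $C_0^j$, one writes $\{u_1,\ldots,u_j\}=N_{F/M}(x)$ for some $x\in K_j^M(F)$. The compatibility of Milnor's norm with base change along the linearly disjoint extensions $F/M$ and $L/M$ then yields
\[ N_{F(X)/L}(x) = N_{F/M}(x) = \{u_1,\ldots,u_j\} \quad \text{dans } K_j^M(L), \]
where $x$ is viewed in $K_j^M(F(X))$ via the natural restriction. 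Since $Z(F(X))\neq \emptyset$, this shows that $\{u_1,\ldots,u_j\}\in N_j(Z/L)$, concluding the proof.

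The main difficulty lies in invoking the base-change compatibility of Milnor's norm in the final computation: this is a standard property of Kato's norm construction, but it must be handled carefully, and it is precisely the identity that makes the analogous calculation in the previous proposition work. A secondary technical point is the construction of $M$ when $u_1,\ldots,u_j$ are algebraically dependent over $k$, where one must verify that $L$ has enough room to adjoin the required transcendentals; this holds because $\mathrm{trdeg}(L/k(u_1,\ldots,u_j))\geq i+1\geq 2$.
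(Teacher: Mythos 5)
Votre preuve est correcte et suit essentiellement le même chemin que celle de l'article : construction d'un sous-corps $M$ de degré de transcendance $j$ algébriquement clos dans $L$ contenant les $u_s$, application de Tsen--Lang à $\overline{M}(X)$ pour descendre un point de $Z$ à $F(X)$, puis utilisation de la propriété $C_0^j$ de $M$ et de la compatibilité de la norme au changement de base. Le seul écart est de présentation (vous explicitez la compatibilité $N_{F(X)/L}\circ\mathrm{res}=\mathrm{res}\circ N_{F/M}$ que l'article laisse implicite), sans différence de fond.
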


\begin{proof}
Notons $K=k(t_1,...,t_{q})$. Soient $i \geq 0$ et $j \geq 0$ tels que $i+j=q$. Si $j=0$, le résultat est évident car $K$ est un corps $C_{q}$. Si $i=0$, le résultat découle de la proposition précédente. On suppose donc que $i \neq 0$ et $j \neq 0$. \\
Fixons maintenant une extension finie $L$ de $K$. Soient $Z$ une hypersurface de degré $d$ dans $\mathbb{P}^n_K$ avec $d^i \leq n$ et $u_1,...,u_j$ des éléments de $L^{\times}$. Soit $(v_1,...,v_r)$ une base de transcendance de l'extension $L/k(u_1,...,u_{j})$ (avec $r\geq 0$). Soient $M$ la clôture algébrique de $k(u_1,...,u_j,v_{q-j},...,v_{r})$ dans $L$ (de sorte que le degré de transcendance de $M/k$ soit $j$) et $X$ une $M$-variété géométriquement intègre sur $L$ de dimension $i$ telle que $M(X) = L$. Comme le corps $\overline{M}(X)$ est $C_i$, la variété $Z$ possède des points dans $\overline{M}(X)$. Par conséquent, il existe une extension finie $F$ de $M$ telle que $Z(F(X)) \neq \emptyset$. De plus, comme $M$ est $C_0^j$ d'après la proposition précédente, on sait que $\{u_1,...,u_j\} \in N_{F/M}(K_j^M(F))$. Par conséquent, $\{u_1,...,u_j\} \in N_{F(X)/M(X)}(K_j^M(F(X)))$, et $K$ est bien $C_i^j$.
\end{proof}

\begin{remarque}~
\begin{itemize}
\item[$\bullet$] Dans le théorème précédent, on a en fait montré que, si $L$ est une extension finie de $K$ et $Z$ est une hypersurface de degré $d$ dans $\mathbb{P}^n_L$ avec $d^i \leq n$, pour tout $j$-symbole $x \in K_j^M(L)$, il existe une extension finie $M$ de $L$ telle que $Z(M) \neq \emptyset$ et $x\in N_{M/L}(K_j^M(M))$. En particulier, si $i=q-1$ et $j=1$, pour tout élément $x$ de $L^{\times}$, il existe une extension finie $M$ de $L$ telle que $Z(M) \neq \emptyset$ et $x\in N_{M/L}(M^{\times})$.
\item[$\bullet$] Le théorème précédent ne peut pas être établi en utilisant les techniques de \cite{Wit}, puisque $\mathbb{C}(x,y)$ n'est pas $C^1_1$-fort.
\end{itemize}
\end{remarque}

\section{Corps locaux}

\subsection{Problème et stratégie}

\hspace{3ex} Le but de cette section est de démontrer les conjectures de Kato et Kuzumaki pour les corps de valuation discrète complets dont le corps résiduel est le corps des fonctions d'une variété sur un corps algébriquement clos de caractéristique 0. Cela concerne par exemple le corps $\mathbb{C}(x)((t))$, pour lequel on sait qu'il vérifie les propriétés $C_0^2$ et $C_2^0$ et pour lequel nous allons établir la propriété $C_1^1$.\\

\hspace{3ex} La principale difficulté pour démontrer que $K=\mathbb{C}(x)((t))$ est $C_1^1$ consiste à prouver que, si $Z$ est une hypersurface de $\mathbb{P}^n_{K}$ de degré $d\leq n$, alors $t\in N_1(Z/K)$. Pour ce faire, nous allons montrer que si l'on adjoint toutes les racines de $t$ à $K$, alors le corps obtenu $K_{\infty}$ est $C_1$: cela impliquera que $Z(K_{\infty}) \neq \emptyset$. Afin d'établir la propriété $C_1$ pour $K_{\infty}$, nous allons devoir établir un critère modulaire permettant de déterminer si une variété affine sur $K_{\infty}$ possède des points rationnels (corollaire \ref{infty}). À cet effet on utilisera fortement les constructions de l'article \cite{Gre} de Greenberg.

\subsection{Le théorème d'approximation de Greenberg revisité}

\hspace{3ex} Commençons par rappeler le théorème 1 de \cite{Gre}:

\begin{theorem} \textit{(Théorème 1 de \cite{Gre})}\\ \label{Green}
Soit $R$ un anneau de valuation discrète hensélien dont on note $K$ le corps des fractions. Soit $t$ une uniformisante de $R$. Soit $\underline{F}=(F_1,...,F_r)$ un système de $r$ polynômes à $n$ variables à coefficients dans $R$. On suppose $K$ de caractéristique 0. Il existe des entiers $N\geq 1$, $c\geq 1$ et $s\geq 0$ (dépendant uniquement de l'idéal $\underline{F}R[\underline{X}]$ de $R[\underline{X}]$ engendré par $F_1,...,F_r$) tels que, pour chaque $\nu \geq N$ et chaque $\underline{x} \in R^n$ vérifiant: $$\underline{F}(\underline{x})\equiv 0 \mod t^{\nu},$$ il existe $\underline{y}\in R^n$ tel que: $$\underline{y}\equiv \underline{x} \mod t^{[\nu/c]-s} \;\;\; \text{et} \;\;\; \underline{F}(\underline{y})=0.$$
\end{theorem}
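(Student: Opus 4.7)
The plan is to combine a refined version of Hensel's lemma with a Noetherian induction on the zero-locus $V = \mathrm{Spec}(R[\underline{X}]/(\underline{F}))$. The workhorse is the Hensel--Rychlik refinement: if some $r \times r$ minor $\Delta$ of the Jacobian matrix $(\partial F_i/\partial X_j)$ satisfies $v(\Delta(\underline{x})) = e$ and $v(F_i(\underline{x})) \geq \nu$ for all $i$ with $\nu > 2e$, then, because $R$ is henselian, there exists $\underline{y} \in R^n$ with $\underline{F}(\underline{y}) = 0$ and $v(y_j - x_j) \geq \nu - e$. This settles the case in which the approximate solution lies over a smooth point of $V$, and it already explains the shape of the final estimate $\nu/c - s$.

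First I would reduce to the case where the ideal $I = (\underline{F})R[\underline{X}]$ is prime. Given a primary decomposition $I = \bigcap_\alpha Q_\alpha$, an effective Nullstellensatz produces an integer $N_0$, depending only on $I$, such that $(\sqrt{Q_\alpha})^{N_0} \subset Q_\alpha$ for every $\alpha$. One then shows that an approximate solution of $I$ at precision $\nu$ is, for at least one index $\alpha_0$, an approximate solution of $\sqrt{Q_{\alpha_0}}$ at precision at least $\nu/N_0$. Replacing $\underline{F}$ by generators of $\sqrt{Q_{\alpha_0}}$ allows me to assume $I$ is prime, up to controlled degradation of the precision.

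Next, since $\mathrm{char}(K) = 0$, the generic fiber $V_K$ is generically smooth, so the Jacobian ideal $J$ generated by the $r \times r$ minors (with $r = \mathrm{ht}(I)$) is not contained in $\sqrt{I}$ in $K[\underline{X}]$. Hence the closed subscheme $W = V(I + J)$ is strictly smaller than $V$. For any approximate solution $\underline{x}$ at precision $\nu$, one of two alternatives holds: either some minor $\Delta$ satisfies $v(\Delta(\underline{x})) < \nu/2$, and Hensel--Rychlik produces the desired $\underline{y}$; or else every minor has $v(\Delta(\underline{x})) \geq \nu/2$, in which case $\underline{x}$ is an approximate solution of the ideal defining $W$ at precision at least $\nu/2$. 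In the latter case I would invoke the inductive hypothesis on $\dim V$ applied to $W$.

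The main obstacle is maintaining uniform control of the constants $N$, $c$, $s$ as the recursion unwinds: each reduction to a smaller stratum rescales the precision by a factor, and the number of strata is bounded only through an a priori bound on the chains of subvarieties of $V$. The technical heart of the argument is therefore an effective form of the Nullstellensatz, which bounds in advance the degrees and coefficients of the algebraic identities witnessing membership $t^m \in I + J^{k}$ and the like. Once this bookkeeping is in place, the geometric induction goes through, and the resulting constants depend only on $I$ (equivalently, only on the ideal $\underline{F}R[\underline{X}]$) as required.
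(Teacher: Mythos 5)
Your overall architecture --- Newton/Hensel at smooth points, stratification by the singular locus, Noetherian induction on the dimension, and reduction to the prime case by primary decomposition with effective membership bounds --- is exactly that of Greenberg's original proof, which is what the paper invokes here: the statement is quoted from \cite{Gre} and not reproved, but the same constructions are transcribed in the paper's proof of Théorème \ref{thinfty}. The characteristic-zero hypothesis is indeed what guarantees that the generic fibre is generically smooth, so the singular stratum is strictly smaller.

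There is, however, one step that fails as written: the interaction between the size of the Jacobian minors and the Hensel step when $I$ is not a complete intersection. In your first paragraph the minors are $r\times r$ with $r$ the number of equations; in your third paragraph they are $r\times r$ with $r=\mathrm{ht}(I)=h$. If the minimal number of generators of $I$ exceeds $h$, these two choices are incompatible. Taking full-size minors, the Jacobian has generic rank $h$ along $V$, so every such minor lies in $\sqrt{I}$ and your locus $W$ is all of $V$: the dichotomy degenerates. Taking $h\times h$ minors, a minor of small valuation only lets you apply Hensel--Rychlik to the corresponding square subsystem $\underline{F}_{I_0}$ of $h$ equations; the exact solution you produce lies on $V(\underline{F}_{I_0})$, which in general has irreducible components other than $V$, and nothing in your argument guarantees that the Newton iterate satisfies the remaining $r-h$ equations rather than landing on one of those extra components. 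Greenberg's proof --- see the analogous construction in the proof of Théorème \ref{thinfty}, with the varieties $V_I^{+}$ and the systems $\underline{G}_I$ --- closes this by making, for each subset $I_0$ of size $h$, a separate inductive call to the lower-dimensional system $(\underline{G}_{I_0},\underline{F})$ cutting out $V_{I_0}^{+}\cap V$: either $\underline{x}$ approximately solves that system to high precision and induction concludes, or $\underline{x}$ is quantifiably far from $V_{I_0}^{+}$, which forces the Hensel iterate onto $V$. Without this extra layer your recursion is not closed. The points you gloss over in the reduction to the prime case (the pigeonhole needed to select one primary component whose radical is approximately annihilated by $\underline{x}$ uniformly in the choice of generators, and the powers of $t$ that appear when passing between ideals of $K[\underline{X}]$ and of $R[\underline{X}]$) are genuine but routine bookkeeping of the kind you anticipate.
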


En particulier, si le système $\underline{F}=0$ possède des solutions modulo $t^m$ pour chaque $m \geq 1$, alors il possède des solutions dans $R$. \\

\hspace{3ex} Fixons pour la suite un anneau de valuation discrète hensélien $R$ de corps des fractions $K$. Soit $t$ une uniformisante de $R$. On suppose $K$ de caractéristique 0. Pour $q\geq 1$, on pose $K_q=K(t^{1/q})$ et $R_q=\mathcal{O}_{K_q}$. On pose aussi $K_{\infty} = \bigcup_{q\geq 1} K_q$ et $R_{\infty}= \bigcup_{q\geq 1} R_q$. Nous voulons établir un résultat similaire au théorème \ref{Green} pour le corps $K_{\infty}$. Pour ce faire, nous commençons par établir un lemme simple d'algèbre commutative.

\begin{definition}
On dit qu'un idéal $I$ de $R[\underline{X}]$ est $t$-saturé si, pour tout $f\in R[\underline{X}]$ tel que $tf\in I$, on a $f\in I$.
\end{definition}

\begin{remarque}
Bien sûr, la définition précédente est indépendante du choix de l'uniformisante $t$. Mais, comme dans la suite nous allons devoir remplacer $R$ par $R_q$, il sera utile de se souvenir d'une uniformisante de l'anneau sur lequel on travaille.
\end{remarque}

\begin{lemma}\label{sature}
Soit $I$ un idéal de $R[\underline{X}]$.
\begin{itemize}
\item[(i)] Si $I$ est $t$-saturé, alors $IR_q[\underline{X}]$ est $t^{1/q}$-saturé pour tout $q\geq 1$.
\item[(ii)] Si $I$ est radical et $t$-saturé, alors $IR_q[\underline{X}]$ est radical pour tout $q \geq 1$.
\end{itemize}
\end{lemma}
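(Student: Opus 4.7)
Pour (i), la stratégie est d'exploiter la structure de $R_q$ comme $R$-module libre de base $(1,t^{1/q},\ldots,t^{(q-1)/q})$, ce qui fournit la décomposition $R_q[\underline{X}] = \bigoplus_{i=0}^{q-1} R[\underline{X}]\,t^{i/q}$ et, par extension, $IR_q[\underline{X}] = \bigoplus_{i=0}^{q-1} I\,t^{i/q}$ comme $R$-modules. Étant donné $f = \sum_i f_i\,t^{i/q}\in R_q[\underline{X}]$ vérifiant $t^{1/q}f\in IR_q[\underline{X}]$, il suffit de calculer $t^{1/q}f = t\,f_{q-1} + \sum_{i=0}^{q-2} f_i\,t^{(i+1)/q}$ et de lire cette égalité composante par composante : on obtient $f_i\in I$ pour $i\leq q-2$ et $t\,f_{q-1}\in I$, puis la $t$-saturation de $I$ livre $f_{q-1}\in I$, d'où $f\in IR_q[\underline{X}]$.

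Pour (ii), je poserais $A:=R[\underline{X}]/I$ et j'utiliserais l'isomorphisme $R_q[\underline{X}]/IR_q[\underline{X}] \cong A[T]/(T^q-t)$ obtenu en voyant $R_q$ comme $R[T]/(T^q-t)$. Les hypothèses se traduisent par le fait que $A$ est un anneau noethérien réduit dans lequel $t$ est régulier (la $t$-saturation de $I$ équivaut à la régularité de $t$ modulo $I$). Comme $A$ est réduit noethérien, il se plonge dans le produit fini $\prod_j \kappa(\mathfrak{p}_j)$, où $\mathfrak{p}_1,\ldots,\mathfrak{p}_r$ désignent ses idéaux premiers minimaux et $\kappa(\mathfrak{p}_j)$ le corps des fractions de $A/\mathfrak{p}_j$. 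La régularité de $t$ entraîne $t\notin\mathfrak{p}_j$, de sorte que l'image $t_j$ de $t$ dans $\kappa(\mathfrak{p}_j)$ est non nulle.

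L'étape clé consiste à tensoriser ce plongement par la $A$-algèbre libre $A[T]/(T^q-t)$ pour obtenir une injection
\[A[T]/(T^q-t) \hookrightarrow \prod_j \kappa(\mathfrak{p}_j)[T]/(T^q-t_j),\]
l'injectivité découlant de la liberté de rang $q$ de part et d'autre comme modules sur $A$ et $\prod_j\kappa(\mathfrak{p}_j)$ respectivement. Il restera à vérifier que chaque facteur $\kappa(\mathfrak{p}_j)[T]/(T^q-t_j)$ est réduit : c'est ici qu'intervient de façon cruciale l'hypothèse de caractéristique nulle, car elle rend séparable le polynôme $T^q-t_j$ (sa dérivée $qT^{q-1}$ ne s'annule qu'en $T=0$, lequel n'est pas racine puisque $t_j\neq 0$), de sorte que le quotient est un produit fini de corps. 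Le point délicat du plan est précisément cette dernière interaction, où il faut conjuguer la régularité de $t$ dans $A$ (pour garantir $t_j\neq 0$) et la caractéristique nulle (pour la séparabilité) ; sans l'une ou l'autre de ces deux hypothèses, le quotient pourrait présenter des nilpotents.
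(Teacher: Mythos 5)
Votre preuve est correcte mais suit un chemin sensiblement différent de celui de l'article, surtout pour le point (ii). Pour (i), l'article n'utilise pas la décomposition complète $R_q[\underline{X}]=\bigoplus_{i=0}^{q-1}R[\underline{X}]\,t^{i/q}$ : il approxime seulement chaque coefficient $g_i$ d'une écriture $t^{1/q}f=\sum_i f_ig_i$ par un élément $h_i\in R[\underline{X}]$ congru à $g_i$ modulo $t^{1/q}$, ce qui revient à n'extraire que le premier « chiffre » du développement ; votre version, qui lit l'égalité composante par composante dans le $R$-module libre $R_q=R[T]/(T^q-t)$, est plus transparente (et cette liberté est bien licite : $T^q-t$ est un polynôme d'Eisenstein, de sorte que $R[t^{1/q}]$ est déjà un anneau de valuation discrète, donc égal à $\mathcal{O}_{K_q}$). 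Pour (ii), la divergence est réelle : l'article déduit le résultat de (i) en passant à la fibre générique — $IK[\underline{X}]$ est radical par localisation, $IK_q[\underline{X}]$ l'est encore car $K_q/K$ est séparable, et on redescend à $R_q[\underline{X}]$ grâce à la $t^{1/q}$-saturation — tandis que vous donnez un argument direct, en plongeant $A[T]/(T^q-t)$ (avec $A=R[\underline{X}]/I$ réduit noethérien où $t$ est régulier) dans un produit fini d'algèbres $\kappa(\mathfrak{p}_j)[T]/(T^q-t_j)$ étales sur des corps. Les deux approches consomment l'hypothèse de caractéristique nulle au même endroit (séparabilité de $T^q-t$), mais la vôtre est autonome là où l'article invoque la stabilité de la réduction par extension séparable des scalaires, et elle isole bien le rôle de la $t$-saturation (régularité de $t$, d'où $t_j\neq 0$) ; pensez seulement à expliciter que $t\notin\mathfrak{p}_j$ résulte de ce que, dans un anneau noethérien réduit, l'ensemble des diviseurs de zéro est la réunion des idéaux premiers minimaux.
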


\begin{proof}
\begin{itemize}
\item[(i)] Supposons que $I$ soit $t$-saturé. Fixons $q \geq 1$. Soit $f\in R_q[\underline{X}]$ tel que $t^{1/q}f \in IR_q[\underline{X}]$. Écrivons $t^{1/q}f = \sum_{i=1}^r f_ig_i$, avec $f_i \in I$ et $g_i \in R_q[\underline{X}]$. Pour chaque $i\in \{1,...,n\}$, considérons $h_i \in R[\underline{X}]$ tel que $t^{1/q}$ divise $g_i-h_i$ dans $R_q$ (ie la valuation de $g_i-h_i$ est strictement positive): cela est possible car $R$ et $R_q$ ont même corps résiduel. On écrit alors:
$$t^{1/q}f = \sum_{i=1}^r f_i(g_i-h_i) + \sum_{i=1}^r f_ih_i.$$
Du coup, $t$ divise $\sum_{i=1}^r f_ih_i$ dans $R[\underline{X}]$. Comme $I$ est $t$-saturé et $\sum_{i=1}^r f_ih_i\in I$, on déduit que $\frac{\sum_{i=1}^r f_ih_i}{t}\in I$. La relation:
$$f = \sum_{i=1}^r f_i\frac{g_i-h_i}{t^{1/q}} + t^{(q-1)/q}\cdot \frac{\sum_{i=1}^r f_ih_i}{t}$$
implique alors que $f\in IR_q[\underline{X}]$et donc que l'idéal $IR_q[\underline{X}]$ est bien $t^{1/q}$-saturé.
\item[(ii)] Supposons que $I$ soit radical et $t$-saturé. Fixons $q \geq 1$. Soit $f\in R_q[\underline{X}]$ tel que $f^n \in IR_q[\underline{X}]$ avec $n>0$. Comme $I$ est radical, on vérifie immédiatement qu'il en est de même de $IK[\underline{X}]$. Cela montre que $IK_q[\underline{X}]$ est aussi radical, car l'extension $K_q/K$ est séparable. On déduit alors que $f \in IK_q[\underline{X}]$. Cela signifie qu'il existe $r\geq 1$ tel que $t^{r/q}f \in IR_q[\underline{X}]$. Mais $IR_q[\underline{X}]$ est $t^{1/q}$-saturé, et donc $f\in IR_q[\underline{X}]$. Cela prouve que $IR_q[\underline{X}]$ est bien radical.
\end{itemize}
\end{proof}

\hspace{3ex} Pour montrer un résultat similaire à celui du théorème \ref{Green} pour $K_{\infty}$, nous avons besoin de travailler simultanément avec tous les $K_q$, qui sont tous redevables du théorème \ref{Green}. Plus précisément, si l'on fixe un système d'équations polynomiales sur $R$, nous pouvons le voir comme un système d'équations à coefficients dans $R_q$ pour chaque $q\geq 1$: le théorème \ref{Green} fournit alors des entiers $N_q$, $ c_q$ et $ s_q$, et notre objectif dans la suite est de contrôler ces entiers quand on fait varier $q$. Il est donc assez naturel d'introduire la définition technique suivante:

\begin{definition}\label{def}
Soit $\underline{F}=(F_1,...,F_r)$ un système de $r$ polynômes à $n$ variables à coefficients dans $R$. 
\begin{itemize}
\item[(i)] Soit $q \in \mathbb{N}^*$. On dit qu'un triplet $(N,c,s) \in \mathbb{N}^* \times \mathbb{N}^* \times \mathbb{N} $ est associé à $(R,t,q,\underline{F})$ s'il vérifie la propriété suivante: pour chaque $\nu \geq N$ et chaque $\underline{x} \in R_q^n$ tel que $$\underline{F}(\underline{x}) \equiv 0 \mod t^{\nu/q},$$ il existe $\underline{y} \in R_q^n$ tel que $$\underline{y} \equiv \underline{x} \mod t^{([\nu /c]-s)/q} \;\;\ \text{et} \;\;\; \underline{F}(\underline{y})=0.$$
\item[(ii)] On dit qu'un  quadruplet $(q_0,N,c,s)\in\mathbb{N}^*\times \mathbb{N}^*\times\mathbb{N}^*\times \mathbb{N} $ est $(R,t,\underline{F})$-admissible si, pour tout $q\geq 1$, le triplet $(qN,c,qs)$ est associé à $(R,t,qq_0,\underline{F})$. 
\end{itemize}
\end{definition}

\hspace{3ex} Nous sommes maintenant en mesure d'énoncer le théorème suivant:

\begin{theorem}\label{thinfty}
Soit $R$ un anneau de valuation discrète hensélien de corps des fractions $K$. On suppose $K$ de caractéristique 0 et on se donne une uniformisante $t$ de $R$. Pour $q\geq 1$, on pose $K_q=K(t^{1/q})$ et $R_q=\mathcal{O}_{K_q}$. On pose aussi $K_{\infty} = \bigcup_{q\geq 1} K_q$ et $R_{\infty}= \bigcup_{q\geq 1} R_q$. Soit $\underline{F}=(F_1,...,F_r)$ un système de $r$ polynômes à $n$ variables à coefficients dans $R$. Alors il existe un quadruplet $(q_0,N,c,s)$ qui est $(R,t,\underline{F})$-admissible.
\end{theorem}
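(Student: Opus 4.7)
Le plan est de démontrer le Théorème \ref{thinfty} par récurrence noethérienne sur l'idéal $I = \underline{F}R[\underline{X}]$, en reprenant la structure de la preuve du Théorème \ref{Green} tout en suivant la manière dont ses constantes se comportent sous le changement de base $R \rightsquigarrow R_Q$ avec $Q = qq_0$.

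Comme la conclusion ne dépend que de l'idéal $I$ et non du choix d'un système de générateurs, je commencerais par me ramener, via décomposition primaire et récurrence noethérienne standard, au cas où $I$ est premier. Deux sous-cas se présentent alors selon que $t$ appartient ou non à $I$. Si $t \in I$, en écrivant $t = \sum_i G_i F_i$ avec $G_i \in R[\underline{X}]$, pour tout $Q \geq 1$ et tout $\underline{x} \in R_Q^n$, la relation $v(t) = 1 \geq \min_i v(F_i(\underline{x}))$ force la congruence $\underline{F}(\underline{x}) \equiv 0 \pmod{t^{\nu/Q}}$ à être sans solution dès que $\nu > Q$. Il suffit alors par exemple de choisir $q_0 = 1$ et $N = 2$ : la condition d'admissibilité devient triviale, l'hypothèse sur $\underline{x}$ étant vide, et l'on peut prendre n'importe quels $c \geq 1$ et $s \geq 0$.

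Si $t \notin I$, comme $I$ est premier, il est automatiquement $t$-saturé, et le Lemme \ref{sature}(ii) entraîne que $IR_q[\underline{X}]$ est radical pour tout $q \geq 1$. Je choisirais alors un ouvert $U$ du schéma $V(I) \subset \mathrm{Spec}(R[\underline{X}])$ qui soit lisse sur $R$ (ce qui est possible en caractéristique nulle) dont le complémentaire est défini par un idéal $J \supsetneq I$. Au voisinage de $U$, on applique un lemme de Hensel quantitatif sur $R_Q$ : les mêmes mineurs jacobiens qui gouvernent les constantes de Greenberg sur $R$ gouvernent celles sur $R_Q$, et le passage de la valuation de $R$ à celle de $R_Q$, dont le groupe des valeurs est $\tfrac{1}{Q}\mathbb{Z}$, se traduit simplement par un facteur multiplicatif $Q$ sur les seuils. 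Cela produit exactement la dépendance linéaire en $Q$, soit $N \mapsto qN$ et $s \mapsto qs$, exigée par la définition d'admissibilité, avec un $c$ indépendant de $q$. Au voisinage de $V(I) \setminus U$, on conclut par l'hypothèse de récurrence appliquée à $J$.

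Le point délicat réside dans cette dernière étape : il faut vérifier que les constantes produites par la preuve standard du Théorème \ref{Green} se factorisent en une partie ne dépendant que de la géométrie de $V(I)$, qui fournit un $c$ universel et rend $N$ et $s$ linéaires en l'indice de ramification $Q$, et une partie provenant seulement de la valuation. Le Lemme \ref{sature} est crucial ici : il assure que la structure schématique ne dégénère pas sous le changement de base vers $R_Q$, si bien que les lieux singuliers, les rangs jacobiens et les autres invariants géométriques restent contrôlés uniformément en $Q$. C'est précisément l'introduction du paramètre auxiliaire $q_0$ dans la Définition \ref{def} qui permet d'absorber les rationnels éventuels apparaissant dans ces rapports de valuations.
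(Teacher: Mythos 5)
Votre stratégie d'ensemble est bien celle de l'article : reprendre la récurrence de Greenberg en contrôlant la dépendance des constantes en l'indice de ramification $Q=qq_0$, le lemme \ref{sature} garantissant que la radicalité et la saturation se conservent sous le changement de base de $R$ vers $R_Q$. Le traitement du cas $t\in I$ est correct, de même que l'idée directrice selon laquelle $N$ et $s$ deviennent linéaires en $Q$ tandis que $c$ reste fixe.

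Il y a en revanche une lacune réelle dans la réduction au cas où $I$ est premier, que vous qualifiez de « standard ». La primalité de $I$ dans $R[\underline{X}]$ n'est pas stable par les changements de base sur lesquels porte précisément l'énoncé : un idéal premier de $R[\underline{X}]$ peut cesser de l'être dans $R_Q[\underline{X}]$, la variété $V$ pouvant se décomposer sur $K_Q$ (par exemple si $t^{1/Q}$ apparaît dans son corps de fonctions), et le lemme \ref{sature} ne préserve que la radicalité, pas l'irréductibilité. C'est exactement pour cela que la preuve de l'article isole le cas où $V_{K_\infty}$ est irréductible et, dans le cas général, commence par choisir un $q_0'$ tel que les composantes irréductibles de $V_{K_{q_0'}}$ restent irréductibles sur $K_\infty$ : le paramètre $q_0$ de la définition \ref{def} sert d'abord à cette stabilisation, et non pas seulement à « absorber des rationnels » dans des rapports de valuations. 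De plus, le passage d'une solution approchée de $\underline{F}$ à une solution approchée de l'idéal d'une composante n'est pas gratuit : il repose sur une inclusion de type Nullstellensatz $t^{uvw/q_0'}(I_1\cdots I_u)^w \subseteq \underline{F}R_{q_0'}[\underline{X}]$ et sur le fait qu'au moins un facteur doit alors avoir une grande valuation, ce qui multiplie $c$ par $uw$ et doit être rendu uniforme en $Q$. Enfin, dans le cas irréductible, la dichotomie lieu lisse / lieu singulier ne suffit pas telle quelle : l'itération de Newton appliquée à un sous-système carré $\underline{F}_I$ peut converger vers une composante parasite $V_I^{+}$ de $V_I$ non contenue dans $V$, et il faut aussi appliquer l'hypothèse de récurrence aux systèmes auxiliaires $(\underline{G}_I,\underline{F})$ découpant ces composantes, comme le fait l'article en suivant Greenberg. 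Sans ces points, la récurrence que vous proposez ne se referme pas.
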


Pour établir ce théorème, nous allons fortement utiliser les constructions utilisées dans la preuve du théorème \ref{Green} (voir \cite{Gre}). 

\begin{proof}
On note $V$ la $K$-variété affine définie par $\underline{F}=0$ et on appelle $m$ sa dimension. On va montrer par récurrence sur $m$ qu'il existe un quadruplet d'entiers $(R,t,\underline{F})$-admissible. 
\begin{itemize}
\item[$\bullet$] Si $m=-1$ (ie $V=\emptyset$), alors il existe $u \in (R\cap \underline{F}R[\underline{X}]) \setminus \{0\}$. Le quadruplet $(1,\text{val}_R (u)+1,1,0)$ est alors $(R,t,\underline{F})$-admissible, puisque pour ces valeurs de $q_0,N,c,s$, l'hypothèse apparaissant dans la définition \ref{def}(i) n'est pas vérifiée.
\item[$\bullet$] Supposons $m \geq 0$.
\begin{itemize}
\item[$\circ$] Supposons d'abord que l'idéal $\underline{F}R[\underline{X}]$ est radical et $t$-saturé, et que $V_{K_{\infty}}$ est irréductible. Dans ce cas, le lemme \ref{sature} montre alors que l'idéal $\underline{F}R_q[\underline{X}]$ de $R_q[\underline{X}]$ est radical pour tout $q\geq 1$. Soient $J$ la matrice jacobienne de $\underline{F}$ et $\underline{D}$ le système des mineurs d'ordre $n-m$ de $J$. Par hypothèse de récurrence, il existe un quadruplet $(q_0',N',c',s')$ qui est $(R,t,\underline{F},\underline{D})$-admissible. Pour $I \subseteq \{1,...,r\}$ avec $|I|=n-m$, notons $\underline{F}_I$ le système constitué des polynômes $F_i$ pour $i \in I$. Soit $V_{I}$ la $K$-variété définie par le système $\underline{F}_I=0$. Soit $V_I^{+}$ la réunion des composantes irréductibles de $V_I$ qui sont de dimension $m$ et différentes de $V$. Soit $\underline{G}_{I}$ un système de générateurs de l'idéal de $V_I^+$ dans $R[\underline{X}]$. Par hypothèse de récurrence, il existe un quadruplet $(q_{0,I},N_I,c_I,s_I)$ qui est $(R,t,\underline{G}_I,\underline{F})$-admissible. Posons:
$$q_0=q_0'\prod\limits_{\underset{I \subseteq \{1,...,n\}}{|I|=n-m}} q_{0,I}.$$
D'après la preuve du théorème 1 de \cite{Gre}, pour chaque $q \geq 1$, le triplet $(N^{(q)},c^{(q)},s^{(q)})$ avec:
\begin{gather*}
N^{(q)}=2+2qq_0 \max \left\lbrace \frac{N'}{q_0'},\max \left\lbrace\frac{N_I}{q_{0,I}}\vert I \subseteq \{1,...,n\}, |I|=n-m\right\rbrace\right\rbrace \\
c^{(q)}=2\max \left\lbrace c', \max \left\lbrace c_I \vert I \subseteq \{1,...,n\}, |I|=n-m\right\rbrace\right\rbrace\\
s^{(q)}=1+qq_0 \max \left\lbrace \frac{s'}{q_0'},\max \left\lbrace\frac{s_I}{q_{0,I}}\vert I \subseteq \{1,...,n\}, |I|=n-m\right\rbrace\right\rbrace
\end{gather*}
est associé à $(R,t,qq_0,\underline{F})$. On en déduit que le quadruplet $(q_0,N,c,s)$ avec:
\begin{gather*}
N=2+2q_0 \max \left\lbrace \frac{N'}{q_0'},\max \left\lbrace\frac{N_I}{q_{0,I}}\vert I \subseteq \{1,...,n\}, |I|=n-m\right\rbrace\right\rbrace \\
c=2\max \left\lbrace c', \max \left\lbrace c_I \vert I \subseteq \{1,...,n\}, |I|=n-m\right\rbrace\right\rbrace\\
s=1+q_0 \max \left\lbrace \frac{s'}{q_0'},\max \left\lbrace\frac{s_I}{q_{0,I}}\vert I \subseteq \{1,...,n\}, |I|=n-m\right\rbrace\right\rbrace
\end{gather*}
est $(R,t,\underline{F})$-admissible.
\item[$\circ$] On ne fait plus d'hypothèses. Soit $q_0'\geq 1$ tel que les composantes irréductibles $W_1,...,W_u$ de $V_{K_{q_0'}}$ restent irréductibles sur $K_{\infty}$. Pour chaque $j\in \{1,...,u\}$, soit $I'_j$ l'idéal premier de $K_{q_0'}[\underline{X}]$ définissant $W_j$. Considérons:
$$I_j:= I'_j \cap R_{q_0'}[\underline{X}].$$
Soit $\underline{G}_j$ un système de générateurs de $I_j$. On vérifie aisément que $I_j$ est radical et $t^{1/q_0'}$-saturé. De plus, la $K_{q_0'}$-variété définie par $I_j$ est $W_j$: c'est une variété de dimension au plus $m$ et $(W_j)_{K_{\infty}}$ est irréductible. On en déduit qu'il existe un quadruplet $(q_{0,j},N_j,c_j,s_j)$ qui est $(R_{q_0'},t^{1/q_0'},\underline{G}_j)$-admissible. 
Remarquons maintenant qu'il existe $w \in \mathbb{N}^{*}$ tel que $(I'_1...I'_u)^w \subseteq \underline{F}K_{q_0'}[\underline{X}]$. On en déduit qu'il existe $v \in \mathbb{N}^{*}$ tel que:
\begin{equation} \label{incl}
t^{uvw/q_0'} (I_1...I_u)^w  \subseteq \underline{F}R_{q_0'}[\underline{X}].
\end{equation}
 Posons:
$$q_0=q_0' \prod_j q_{0,j},$$
et donnons-nous un entier $q\geq 1$. Notons $\text{val}: R_{qq_0} \rightarrow \mathbb{Z}$ la valuation sur $R_{qq_0}$, et considérons les entiers:
\begin{gather*}
N^{(q)}=uw \frac{qq_0}{q_0'}\left( \max \left\lbrace \frac{N_j}{q_{0,j}}\right\rbrace + v\right)  \\
c^{(q)} = uw\max \{c_j\}\\
s^{(q)} = 1+\frac{qq_0}{q_0'}\left( v+\max \left\lbrace\frac{s_{j}}{q_{0,j}}\right\rbrace\right) .
\end{gather*}
Soient $\nu \geq N^{(q)}$ et $\underline{x} \in R_{qq_0}^n$ tels que $\underline{F}(\underline{x}) \equiv 0 \mod t^{\nu /(qq_0)}$. Si l'on se donne des polynômes $g_1\in I_1R_{qq_0}[\underline{X}],...,g_u\in I_uR_{qq_0}[\underline{X}]$, l'inclusion (\ref{incl}) implique que:
$$\nu \leq \text{val} \left[ t^{uvw/q_0'}\left( \prod_{j=1}^u g_j(\underline{x})  \right)^w  \right] = \frac{qq_0}{q_0'}uvw + w \sum_j \text{val}(g_j(\underline{x})).   $$
Il existe donc $j_0 \in \{1,...,u\}$ tel que:
$$\text{val}(g_{j_0}(\underline{x})) \geq \frac{\nu}{uw} - \frac{qq_0}{q_0'}v.$$
Cela étant vrai quels que soient les polynômes $g_1\in I_1R_{qq_0}[\underline{X}],...,g_u\in I_uR_{qq_0}[\underline{X}]$ choisis, on déduit qu'il existe $j_1 \in \{1,...,u\}$ tel que:
\begin{equation} \label{ineg1}
\forall g \in I_{j_1}R_{qq_0}[\underline{X}], \text{val}(g(\underline{x})) \geq \frac{\nu}{uw} - \frac{qq_0}{q_0'}v.
\end{equation}
Comme $\nu \geq N^{(q)}$, on a aussi:
\begin{equation} \label{ineg2}
\frac{\nu}{uw} - \frac{qq_0}{q_0'}v \geq \frac{qq_0}{q_0'q_{0,j_0}}N_{j_0}.
\end{equation}
Comme le quadruplet $(q_{0,{j_0}},N_{j_0},c_{j_0},s_{j_0})$ est $(R_{q_0'},t^{1/q_0'},\underline{G}_{j_0})$-admissible, le triplet $(\frac{qq_0}{q_0'q_{0,j}}N_{j_0},c_{j_0},\frac{qq_0}{q_0'q_{0,{j_0}}}s_{j_0})$ est associé à $\left( R_{q_0'},t^{1/q_0'},\frac{qq_0}{q_0'},\underline{G}_{j_0} \right) $. On déduit alors de (\ref{ineg1}) et de (\ref{ineg2}) qu'il existe $\underline{y}\in R_{qq_0}^n$ tel que:
$$\underline{y} \equiv \underline{x} \mod t^{\mu/(qq_0)} \;\;\; \text{et} \;\;\; \underline{G}_{j_0}(\underline{y}) = 0,$$
où $\mu=[\frac{\nu}{c_{j_0}uw}]-\frac{qq_0}{q_0'c_{j_0}}v-1-\frac{qq_0}{q_0'q_{0,j_0}}s_{j_0}$. On en déduit que:
$$\underline{y} \equiv \underline{x} \mod t^{([\nu/c^{(q)}]-s^{(q)})/(qq_0)} \;\;\; \text{et} \;\;\; \underline{F}(\underline{y}) = 0,$$
et donc que le triplet $(N^{(q)},c^{(q)},s^{(q)})$ est associé à $(R,t,qq_0,\underline{F})$. Cela prouve que le quadruplet $(q_0,N,c,s)$ avec:
\begin{gather*}
N=uw \frac{q_0}{q_0'}\left( \max \left\lbrace \frac{N_j}{q_{0,j}}\right\rbrace + v\right)  \\
c = uw\max \{c_j\}\\
s = 1+\frac{q_0}{q_0'}\left( v+\max \left\lbrace\frac{s_{j}}{q_{0,j}}\right\rbrace\right) 
\end{gather*}
est $(R,t,\underline{F})$-admissible.
\end{itemize}
\end{itemize}
\end{proof}

\begin{corollary}\label{thcro}
Soit $R$ un anneau de valuation discrète hensélien de corps des fractions $K$. On suppose $K$ de caractéristique 0 et on se donne une uniformisante $t$ de $R$. Pour $q\geq 1$, on pose $K_q=K(t^{1/q})$ et $R_q=\mathcal{O}_{K_q}$. On pose aussi $K_{\infty} = \bigcup_{q\geq 1} K_q$ et $R_{\infty}= \bigcup_{q\geq 1} R_q$. Soit $\underline{F}=(F_1,...,F_r)$ un système de $r$ polynômes à $n$ variables à coefficients dans $R_{\infty}$.
Il existe $M \in \mathbb{Q}^+$, $\gamma \in \mathbb{N}^*$ et $\sigma \in \mathbb{Q}^+$ vérifiant la propriété suivante: pour chaque rationnel $\mu \geq M$ et chaque $\underline{x} \in R_{\infty}^n$ tel que $$\underline{F}(\underline{x}) \equiv 0 \mod t^{\mu},$$ il existe $\underline{y} \in R_{\infty}^n$ tel que $$\underline{y} \equiv \underline{x} \mod t^{\mu /\gamma-\sigma} \;\;\ \text{et} \;\;\; \underline{F}(\underline{y})=0.$$
\end{corollary}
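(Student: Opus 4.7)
Le plan consiste à se ramener directement au théorème \ref{thinfty}. Bien que l'énoncé du corollaire autorise les coefficients de $\underline{F}$ dans $R_{\infty}$ (et non seulement dans $R$) et manipule des exposants rationnels $\mu$, ces deux extensions sont de nature technique et se traitent en élargissant l'anneau de base puis en choisissant des dénominateurs suffisamment divisibles.

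Concrètement, on commencera par observer que $\underline{F}$ n'a qu'un nombre fini de coefficients, donc il existe $q_1 \geq 1$ tel que tous ceux-ci appartiennent à $R_{q_1}$. L'anneau $R_{q_1}$ est encore un anneau de valuation discrète hensélien, de corps des fractions $K_{q_1}$ de caractéristique 0 et d'uniformisante $t^{1/q_1}$; de plus, les unions $\bigcup_q R_{qq_1}$ et $\bigcup_q K_{qq_1}$ coïncident respectivement avec $R_{\infty}$ et $K_{\infty}$, de sorte que la donnée $(R_{q_1}, t^{1/q_1})$ engendre la même tour que $(R,t)$. Le théorème \ref{thinfty} appliqué à $(R_{q_1}, t^{1/q_1}, \underline{F})$ fournit alors un quadruplet $(q_0, N, c, s)$ qui est $(R_{q_1}, t^{1/q_1}, \underline{F})$-admissible.

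On posera ensuite $\gamma = c$, $M = N/(q_0 q_1)$ et $\sigma = (s+1)/(q_0 q_1)$, et on vérifiera que ces valeurs conviennent. Étant donnés un rationnel $\mu \geq M$ et un $\underline{x} \in R_{\infty}^n$ vérifiant $\underline{F}(\underline{x}) \equiv 0 \mod t^{\mu}$, on choisira $q \geq 1$ assez divisible pour que $\underline{x}$ appartienne à $R_{qq_0q_1}^n$ et pour que $\nu := \mu q q_0 q_1$ soit un entier; la minoration $\mu \geq M$ garantit précisément $\nu \geq qN$. L'admissibilité appliquée à ce $q$ et à ce $\nu$ livre alors un $\underline{y} \in R_{qq_0q_1}^n \subseteq R_{\infty}^n$ avec $\underline{F}(\underline{y}) = 0$ et $\underline{y} \equiv \underline{x} \mod t^{([\nu/c] - qs)/(qq_0q_1)}$. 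Un calcul élémentaire utilisant $[\nu/c] \geq \nu/c - 1$ conduit alors à la minoration
$$\frac{[\nu/c]-qs}{qq_0q_1} \;\geq\; \frac{\mu}{c} - \frac{1}{qq_0q_1} - \frac{s}{q_0q_1} \;\geq\; \frac{\mu}{\gamma}-\sigma,$$
qui est précisément la borne requise.

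Le seul point délicat à surveiller est la traduction des différentes normalisations de valuation lors des passages successifs de $R$ à $R_{q_1}$ puis à $R_{qq_0q_1}$: il faut s'assurer que les exposants rationnels apparaissant dans les congruences correspondent bien à des exposants entiers adaptés à chaque étage de la tour. Ce travail comptable est de routine dès lors qu'on choisit un dénominateur commun à $\mu$ et aux coordonnées de $\underline{x}$, ce qui est toujours possible puisque l'un et les autres ne font intervenir qu'un nombre fini de dénominateurs.
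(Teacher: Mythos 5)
Votre preuve est correcte et suit essentiellement la même démarche que celle de l'article : réduction à un système à coefficients dans un étage fini de la tour, application du théorème \ref{thinfty}, choix des constantes $M$, $\gamma$, $\sigma$ à partir du quadruplet admissible, puis vérification arithmétique via $[\nu/c]\geq \nu/c-1$. La seule différence est cosmétique : vous gardez le facteur $q_1$ explicite dans $M$ et $\sigma$ là où l'article l'absorbe en renommant $R_{q_1}$ en $R$.
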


\begin{proof}
Quitte à remplacer $R$ par $R_q$ pour un certain $q$ assez grand, on peut supposer que le système $\underline{F}$ est à coefficients dans $R$. D'après le théorème \ref{thinfty}, il existe un quadruplet $(R,t,\underline{F})$-admissible $(q_0,N,c,s)$. Posons $M=N/q_0$, $\gamma = c$ et $\sigma =\frac{s+1}{q_0}$. Considérons $\mu \in\mathbb{Q}$ tel que $\mu \geq M$ et écrivons $\mu =a/b$ avec $a,b\in \mathbb{N}^*$. Supposons qu'il existe $\underline{x} \in R_{\infty}^n$ tel que $\underline{F}(\underline{x}) \equiv 0 \mod t^{\mu}$. Soit $q_1  \geq 1$ tel que $\underline{x}\in R_{q_1}^n$. On sait que, pour tout $q\geq 1$, le triplet $(qN,c,qs)$ est associé à $(R,t,qq_0,\underline{F})$. En particulier, le triplet $(bq_1N,c,bq_1s)$ est associé à $(R,t,bq_1q_0,\underline{F})$. Comme $\underline{F}(\underline{x}) \equiv 0 \mod t^{\mu}$ et $\mu \geq N/q_0$, cela montre qu'il existe $\underline{y}\in R_{bq_0q_1}^n$ tel que $\underline{F}(\underline{y})=0$ et:
$$\underline{y} \equiv \underline{x} \mod t^{\lambda}$$
avec $\lambda = \frac{1}{bq_1q_0}\left( \left[ \frac{aq_1q_0}{c}\right] -bq_1s \right) $. Cela achève la preuve puisque $\lambda \geq \frac{\mu}{c}-\sigma$.
\end{proof}

\begin{corollary}\label{infty}
Sous les hypothèses du théorème \ref{thcro}, si la congruence $\underline{F}(\underline{x}) \equiv 0 \mod t^{\nu}$ a des solutions dans $R_{\infty}$ pour chaque entier $\nu\geq 1$, alors l'équation $\underline{F}(\underline{x})=0$ a des solutions dans $R_{\infty}$.
\end{corollary}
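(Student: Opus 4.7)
The plan is to deduce this corollary essentially directly from Corollary \ref{thcro}, which is the quantitative Greenberg-type approximation theorem for $R_{\infty}$ that has just been established. That corollary provides a fixed triple $(M,\gamma,\sigma)\in\mathbb{Q}^+\times\mathbb{N}^*\times\mathbb{Q}^+$ (depending only on $\underline{F}$) such that any approximate solution at level $\mu\geq M$ can be refined to an exact solution. So the work consists only in choosing a good integer $\nu$ to feed into this machine.

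First, I would invoke Corollary \ref{thcro} applied to the system $\underline{F}$ to obtain the constants $M,\gamma,\sigma$. Since $M$ is a fixed positive rational, I can pick an integer $\nu_0\geq M$, for instance $\nu_0=\lceil M\rceil$. By the standing hypothesis, the congruence $\underline{F}(\underline{x})\equiv 0\mod t^{\nu_0}$ admits a solution $\underline{x}\in R_{\infty}^n$.

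Then, applying Corollary \ref{thcro} with $\mu:=\nu_0\geq M$, I obtain some $\underline{y}\in R_{\infty}^n$ such that $\underline{y}\equiv\underline{x}\mod t^{\nu_0/\gamma-\sigma}$ and, crucially, $\underline{F}(\underline{y})=0$. This $\underline{y}$ is the desired solution in $R_{\infty}^n$, which finishes the proof. There is no real obstacle here: the entire content of the statement has been absorbed into Corollary \ref{thcro}, whose proof (via the admissible quadruple of Theorem \ref{thinfty}) is where all the actual work takes place. The only mild subtlety is to remember that one is allowed to choose $\nu$ arbitrarily large among integers, which is why it suffices to have $M$ be a fixed rational rather than the optimal bound for a given $\underline{x}$.
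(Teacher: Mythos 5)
Your proof is correct and is exactly the intended argument: the paper states this corollary without proof precisely because it follows immediately from Corollaire \ref{thcro} by choosing any integer $\nu_0\geq M$ and applying the approximation statement to a solution modulo $t^{\nu_0}$. Nothing is missing.
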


\subsection{Le corps $\mathbb{C}(x)((t))$ est $C_1^1$}\label{lolo}

\hspace{3ex} Nous sommes finalement en mesure d'établir les conjectures de Kato et Kuzumaki pour les corps de valuation discrète complets dont le corps résiduel est le corps des fonctions d'une variété sur un corps algébriquement clos de caractéristique 0:

\begin{theorem}\label{c(x)((t))}
Soient $k$ un corps algébriquement clos de caractéristique nulle et $m\geq 1$. Alors le corps $k(t_1,...,t_m)((t))$ est $C_i^j$ pour tous $i \geq 0$ et $j \geq 0$ tels que $i+j=m+1$.
\end{theorem}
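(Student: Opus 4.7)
Ma stratégie serait de distinguer trois régimes selon les valeurs de $i$ et $j$. Les cas extrêmes $j=0$ et $i=0$ se règlent par des résultats classiques: pour $j=0$, on invoquerait le fait que le corps résiduel $k(t_1,...,t_m)$ est $C_m$ par Tsen--Lang et qu'un argument à la Greenberg donnerait alors que $K=k(t_1,...,t_m)((t))$ est $C_{m+1}$; pour $i=0$, la dimension cohomologique de $K$ étant $m+1$, la conjecture de Bloch--Kato (Rost--Voevodsky) entraînerait directement $C_0^{m+1}$.

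Pour le cas intéressant $i\geq 1$ et $j\geq 1$, on considérerait une extension finie $L/K$, une hypersurface $Z\subset\mathbb{P}^n_L$ de degré $d$ avec $d^i\leq n$, et un symbole $\{u_1,...,u_j\}\in K_j^M(L)$ à exprimer comme somme de normes. En écrivant chaque $u_s=t^{a_s}w_s$ avec $w_s$ unité $t$-adique puis en développant par multilinéarité, on se ramènerait à deux types de symboles: ceux en unités seules (traités par réduction au corps résiduel $k(t_1,...,t_m)$ en utilisant ses propriétés $C_{i'}^{j'}$ avec $i'+j'=m$ fournies par le théorème \ref{tresglobal}) et ceux de la forme $\{t,w_2,...,w_j\}$. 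Pour ces derniers, l'idée-clé annoncée dans la sous-section stratégique est d'adjoindre toutes les racines $t^{1/q}$ à $L$ pour former $L_{\infty}=\bigcup_q L(t^{1/q})$ et de démontrer que $Z(L_{\infty})\neq\emptyset$. Un point $L(t^{1/q})$-rationnel de $Z$ fournirait alors, via $N_{L(t^{1/q})/L}(t^{1/q})=\pm t$, l'appartenance de $t$ à $N_1(Z/L)$; combiné à un traitement des unités $w_2,...,w_j$ dans l'esprit du théorème \ref{tresglobal}, cela donnerait $\{t,w_2,...,w_j\}\in N_j(Z/L)$.

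La partie la plus délicate, et l'obstacle principal que je prévois, serait la démonstration de $Z(L_{\infty})\neq\emptyset$. On l'obtiendrait via le corollaire \ref{infty}: il suffirait de produire, pour chaque entier $\nu\geq 1$, une solution modulo $t^{\nu}$ des équations définissant $Z$ à valeurs dans $R_{\infty}$. La fabrication de ces solutions approchées se traduirait en un problème géométrique sur le corps résiduel $k(t_1,...,t_m)$ et ses extensions finies, résolu grâce aux propriétés $C_{i-1}^j$ du corps résiduel (théorème \ref{tresglobal}, puisque $(i-1)+j=m$), la condition $d^i\leq n$ assurant $d^{i-1}\leq n$ pour les variétés résiduelles en jeu. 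L'articulation fine entre les contraintes effectives de précision du corollaire \ref{infty} et la structure géométrique du problème modulaire hérité de $Z$ constitue le point le plus technique à surmonter.
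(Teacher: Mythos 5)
Votre plan reproduit la trame générale de la preuve du texte (réduction aux cas $i,j\geq 1$, décomposition des symboles en symboles d'unités et symboles contenant $t$, recours au corollaire \ref{infty} pour produire un point après adjonction de racines de $t$), et le traitement des symboles d'unités est essentiellement récupérable. La lacune sérieuse se situe dans le traitement de $\{t,w_2,...,w_j\}$ : le corps $L_{\infty}=\bigcup_q L(t^{1/q})$, obtenu en n'adjoignant que des racines de $t$, ne contient en général aucun point de $Z$, car son corps résiduel reste celui de $L$ (une extension finie de $k(t_1,...,t_m)$, donc seulement $C_m$), alors qu'il vous faut un énoncé de type $C_i$ avec $i$ éventuellement strictement inférieur à $m$. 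Exemple concret avec $m=2$, $i=1$, $j=2$ : pour $L=k(x,y)((t))$ et $Z$ la conique $xX_0^2+yX_1^2=X_2^2$ dans $\mathbb{P}^2_L$ (on a bien $d^i=2\leq n=2$), toute extension $L(t^{1/q})$ est totalement ramifiée de même corps résiduel, et par bonne réduction un point de $Z(L(t^{1/q}))$ fournirait un point de la conique réduite sur $k(x,y)$, qui n'existe pas ; donc $Z(L_{\infty})=\emptyset$. Montrer qu'un tel corps $\bigcup_q L(t^{1/q})$ hérite d'une propriété $C_i$ est d'ailleurs précisément le type de question laissée ouverte à la fin de l'article.

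La preuve du texte contourne cet obstacle en agrandissant le corps par des extensions résiduelles soigneusement choisies : on prend un sous-corps $M$ du corps résiduel, de degré de transcendance $j-1$ sur $k$ et contenant $u_1,...,u_{j-1}$, une $M$-variété $Y'$ de dimension $i$ telle que $M(Y')$ soit le corps résiduel, et on travaille dans $K_{\infty}=\bigcup_{q}\bigcup_{F/M\text{ finie}}F(Y')((t^{1/q}))$. Les extensions $F/M$ sont indispensables pour deux raisons : d'une part le corps $\overline{M((t))}(Y')$, qui est $C_i$ par Tsen--Lang puisque $\dim Y'=i$, se plonge alors dans $K_{\infty}$ et fournit, par troncature de l'équation de $Z$ modulo $t^{\nu}$, les solutions approchées exigées par le corollaire \ref{infty} ; d'autre part elles ne perturbent pas le calcul final de normes parce que $M$ est $C_0^{j-1}$, de sorte que $\{u_1,...,u_{j-1}\}$ est une norme depuis $F$. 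Votre mécanisme de remplacement (propriétés $C_{i-1}^{j}$ du corps résiduel et $d^{i-1}\leq n$) ne rend pas compte de cela : la congruence $f\equiv 0 \bmod t^{\nu}$ ne se ramène pas à une hypersurface de degré $d$ sur le corps résiduel, et c'est la propriété $C_i$ du corps auxiliaire $\overline{M((t))}(Y')$, avec toute la force de l'hypothèse $d^i\leq n$, qui est réellement utilisée.
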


\begin{proof}
Le corps $k(t_1,...,t_m)((t))$ étant $C_{m+1}$ et de dimension cohomologique $m+1$, on peut supposer $j\neq 0$ et $i \neq 0$. Soient $Y$ une $k$-variété intègre de dimension $m$ et $K=k(Y)((t))$. Dans la suite, on fixe une clôture algébrique $\overline{K}$ de $K$. Tous les corps seront vus dans $\overline{K}$. Soit $Z$ une hypersurface de $\mathbb{P}^n_K$ de degré $d$, avec $d^i\leq n$.
\begin{itemize}
\item[$\bullet$] Soit $(u_1,...,u_j) \in {k(Y)^{\times}}^j$. Soit $(v_1,...,v_r)$ une base de transcendance de l'extension $k(Y)/k(u_1,...,u_j)$. Soit $M$ la clôture algébrique de $k(u_1,...,u_j,v_{m-j+1},...,v_{r})$ dans $K$: c'est un corps de degré de transcendance $j$ sur $k$. Soit $Y'$ une $M$-variété géométriquement intègre de dimension $i-1$ telle que $k(Y)=M(Y')$. Le corps $\overline{M}(Y')$ est un corps $C_{i-1}$, et donc le corps:
$$K_M:= \bigcup_{F/M \text{ finie}} F(Y')((t))$$
est $C_{i}$. On en déduit que $Z(K_M) \neq \emptyset$, et donc qu'il existe une extension finie $F$ de $M$ telle que $Z(F(Y')((t)))\neq \emptyset$. Comme $M$ est $C_0^j$, on a $\{u_1,...,u_j\} \in N_{F/M}(K_j^M(F))$, et donc $\{u_1,...,u_j\} \in N_j(Z/K)$.
\item[$\bullet$]  Soit $(u_1,...,u_{j-1}) \in {k(Y)^{\times}}^{j-1}$. Soit $f\in k(Y)[[t]][X_0,...,X_n]$ un polynôme homogène définissant $Z$.  Soit $(v_1,...,v_r)$ une base de transcendance de l'extension $k(Y)/k(u_1,...,u_{j-1})$. Soit $M$ la clôture algébrique de $k(u_1,...,u_{j-1},v_{m-j+2},...,v_{r})$ dans $K$: c'est un corps de degré de transcendance $j-1$ sur $k$. Soit $Y'$ une $M$-variété géométriquement intègre de dimension $i$ telle que $k(Y)=M(Y')$. On introduit:
\begin{gather*}
K_M:= \bigcup_{F/M \text{ finie}} F(Y')((t)),\\
R_M:= \bigcup_{F/M \text{ finie}} F(Y')[[t]].
\end{gather*}
L'anneau $R_M$ est un anneau de valuation discrète hensélien de corps des fractions $K_M$, d'uniformisante $t$ et de corps résiduel $\overline{M}(Y')$. On pose aussi:
\begin{gather*}
K_{\infty}:=\bigcup_{q\geq 1} K_M(t^{1/q}),\\
R_{\infty}: = \bigcup_{q\geq 1} R_M[t^{1/q}].
\end{gather*}
Soient $\mathfrak{m}_{\infty}$ l'idéal maximal de $R_{\infty}$ et $\nu \geq 1$ un entier. Soient $f_{\nu} \in M((t))(Y')[X_0,...,X_n]$ et $g_{\nu} \in R_{\infty}[X_0,...,X_n]$ homogènes de degré $d$ tels que:
$$f=f_{\nu}+t^{\nu}g_{\nu}.$$
Comme $\overline{M((t))}(Y')$ est $C_i$ et est contenu dans $K_{\infty}$, il existe $(x_0,...,x_n)\in  R_{\infty}^{n+1} \setminus \mathfrak{m}_{\infty}^{n+1}$ tel que $f_{\nu}(x_0,...,x_n)=0$. On a du coup:
$$f(x_0,...,x_n) \equiv 0 \mod t^{\nu}.$$
Cela étant vrai pour tout $\nu \geq 1$, on déduit du corollaire \ref{infty} que $Z(K_{\infty}) \neq \emptyset$. Considérons alors une extension finie $F/M$ et un entier $q\geq 1$ tels que $Z(F(Y')((t^{1/q})))\neq \emptyset$. Comme $M$ est un corps $C_0^{j-1}$, il existe $x\in K_{j-1}^M(F)$ tel que $N_{F/K}(x) = \{u_1,...,u_{j-1}\}$. Par conséquent:
\begin{align*}
N_{F(Y')((t^{1/q}))/K}(\{x,t^{1/q}\}) & = N_{F(Y')((t))/M(Y')((t))}\left(  N_{F(Y')((t^{1/q}))/F(Y')((t))}(\{x,t^{1/q}\})\right) \\& =\pm N_{F(Y')((t))/M(Y')((t))} ( \{x,t\})\\&=\pm \{u_1,...,u_{j-1},t\}.
\end{align*}
On en déduit que $\{u_1,...,u_{j-1},t\}\in N_j(Z/K)$.
\end{itemize}
Comme le groupe $K_j^M(K)/d$ est engendré par les symboles de la forme $\{u_1,...,u_j\}$ et $\{u_1,...,u_{j-1},t\}$ avec $(u_1,...,u_j) \in {k(Y)^{\times}}^j$, on déduit que $N_j(Z/K) = K^{\times}$.
 \end{proof}
 
 \begin{remarque}~
 \begin{itemize}
\item[$\bullet$] Soient $k$ un corps algébriquement clos de caractéristique nulle et $Y$ une $k$-variété intègre de dimension $m$. La preuve précédente montre en fait que, si $M$ est une extension de degré de transcendance $j-1$ sur $k$ contenue dans $k(Y)$ et si $Y'$ est une $M$-variété intègre telle que $M(Y')=k(Y)$, alors le corps:
$$K_{\infty} = \bigcup_{q\geq 1} \bigcup_{\text{$F/M$ finie}} F(Y')((t))(t^{1/q})$$
est $C_{m+1-j}$. En particulier, le corps $\bigcup_{q\geq 1}\mathbb{C}(x)((t))(t^{1/q})$ est $C_1$.
\item[$\bullet$] Le théorème précédent ne peut pas être établi en utilisant les techniques de \cite{Wit}, puisque $\mathbb{C}(x)((t))$ n'est pas $C^1_1$-fort.
\item[$\bullet$] Soit $L$ le corps des fonctions d'une courbe projective lisse géométriquement intègre $C$ sur $\mathbb{C}((t))$. Soit $\mathcal{C}$ un modèle propre régulier de $C$ sur $\mathbb{C}[[t]]$ dont la fibre spéciale est un diviseur à croisements normaux stricts. Si l'on veut démontrer que $L$ est $C_1^1$ par des arguments de type principe local-global, le théorème précédent permet d'utiliser toutes les places de $L$ provenant d'un point de codimension 1 de $\mathcal{C}$ (et pas seulement celles qui proviennent d'un point de codimension 1 de $C$). La même remarque s'applique si l'on prend pour $L$ une extension finie de $\mathbb{C}((x,y))$, pour $C$ le spectre de la clôture intégrale de $\mathbb{C}[[x,y]]$ dans $L$, et pour $\mathcal{C}$ une résolution des singularités de $C$ dont la fibre spéciale est un diviseur à croisements normaux stricts.
\end{itemize}
 \end{remarque}

\section{Vers certains corps de fonctions}

\hspace{3ex} Dans cette partie, nous allons nous intéresser aux corps $\mathbb{C}((t))(x)$ et $\mathbb{C}((x,y))$, ou plus généralement à $\mathbb{C}((t_1))...((t_m))(x)$ et $\mathbb{C}((t_1))...((t_{m-1}))((x,y))$. Pour ce faire, nous avons besoin de commencer par établir le lemme suivant, qui sera ensuite appliqué à certains complétés de $\mathbb{C}((t_1))...((t_m))(x)$ et de $\mathbb{C}((t_1))...((t_{m-1}))((x,y))$:

\begin{lemma}\label{ramif}
Soient $k$ un corps algébriquement clos de caractéristique nulle et $m$ un entier naturel. Soient $K=k((t_1))...((t_m))$ et $f\in K[X]$ un polynôme irréductible unitaire de degré premier $d$ tel que $f^{(d-1)}(0)=0$. Alors $f(0) \not\in {K^{\times}}^d$.
\end{lemma}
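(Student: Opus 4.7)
The plan is to argue by induction on $m$, using Newton polygon considerations together with a Hensel-type factorization argument on the residue field $K_0 := k((t_1))...((t_{m-1}))$.

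The base case $m=0$ is vacuous: then $K=k$ is algebraically closed and there is no irreducible polynomial of prime degree. For the inductive step, write $K=K_0((t_m))$ with its $t_m$-adic valuation $v$, Henselian valuation ring $R=K_0[[t_m]]$, and residue field $K_0$. The hypothesis $f^{(d-1)}(0)=0$ means exactly that the coefficient of $X^{d-1}$ in $f$ vanishes, so $f(X) = X^d + a_{d-2}X^{d-2}+ \cdots + a_0$. Suppose for contradiction that $a_0 = b^d$ for some $b\in K^{\times}$.

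Since $K$ is Henselian and $f$ is irreducible, its Newton polygon has a single slope, so every root of $f$ has valuation $v(a_0)/d = v(b)$. Standard estimates on elementary symmetric functions give $v(a_i) \geq (d-i) v(b)$ for each $i$. I therefore perform the substitution $Y=X/b$, setting $g(Y) = b^{-d} f(bY)$: this is a monic irreducible polynomial in $R[Y]$, still with vanishing $Y^{d-1}$ coefficient, and with $g(0)=1$. Let $\bar g \in K_0[Y]$ be its reduction modulo $t_m$.

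Since $g$ is irreducible and $R$ is Henselian, Hensel's lemma forces $\bar g$ to be a power of a single monic irreducible polynomial of $K_0[Y]$. As $d = \deg \bar g$ is prime, either $\bar g$ is irreducible of degree $d$, or $\bar g = (Y-\gamma)^d$ for some $\gamma \in K_0$. In the latter case, the vanishing of the coefficient of $Y^{d-1}$ reads $-d\gamma =0$, forcing $\gamma=0$ (char.\ zero), so $\bar g = Y^d$; but this contradicts $\bar g(0)=1$. In the former case, $\bar g$ is irreducible of prime degree $d$ over $K_0$, still with $\bar g^{(d-1)}(0)=0$, so the inductive hypothesis applied to $K_0$ yields $\bar g(0) \notin (K_0^{\times})^d$; but $\bar g(0)=1$ is obviously a $d$-th power, contradiction.

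The only nontrivial ingredient is the Newton polygon remark allowing the normalization $Y=X/b$ so that $g$ has integral coefficients and unit constant term; once this is in place, the rest follows from Hensel's lemma and the induction. I do not expect any serious obstacle here, provided the reduction modulo $t_m$ is set up cleanly.
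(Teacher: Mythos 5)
Your proof is correct, but it takes a genuinely different route from the paper's. The paper exploits the fact that the absolute Galois group of $K=k((t_1))\dots((t_m))$ is abelian, so that the root field $L$ of $f$ is cyclic of degree $d$; Kummer theory gives $L=K(\sqrt[d]{u})$ with $\mathrm{val}(u)\in\{0,\dots,d-1\}$, the root $x$ is expanded in the basis $1,\sqrt[d]{u},\dots,\sqrt[d]{u}^{d-1}$ (the hypothesis $f^{(d-1)}(0)=0$ killing the constant coordinate via the trace), and the two cases $\mathrm{val}(u)\neq 0$ and $\mathrm{val}(u)=0$ are handled respectively by the valuation computation $\mathrm{val}(f(0))=\mathrm{val}(N_{L/K}(x))\notin d\mathbb{Z}$ and by reduction modulo the last variable plus induction. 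You replace all of this by a Newton-polygon normalization and the Henselian dichotomy: assuming $f(0)=b^d$, the single slope of the Newton polygon of the irreducible $f$ lets you rescale to a monic integral $g$ with $g(0)=1$ and vanishing subleading coefficient, after which $\bar g$ is either irreducible of degree $d$ (excluded by the inductive hypothesis, since $\bar g(0)=1$ is a $d$-th power) or equal to $(Y-\gamma)^d$ (excluded because $-d\gamma=0$ in characteristic zero forces $\bar g=Y^d$, contradicting $\bar g(0)=1$). Your argument is more elementary — no Kummer theory and no use of the abelianness of $\mathrm{Gal}(\overline{K}/K)$ — and correspondingly more portable: it works verbatim over any Henselian discretely valued field of residue characteristic prime to $d$ whose residue field inductively satisfies the conclusion. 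The paper's ramified case does yield the slightly sharper fact that $\mathrm{val}(f(0))$ is not divisible by $d$, but that extra precision is not used elsewhere; in the unramified branch the two proofs coincide in spirit, both pushing the statement down to the residue field.
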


\begin{proof}
On procède par récurrence sur $m$. Le résultat est évident si $m=0$. On suppose donc le résultat démontré pour un certain entier naturel $m$ et on considère l'anneau $R=k((t_1))...((t_m))[[t_{m+1}]]$ et le corps $K=k((t_1))...((t_{m+1}))$. Soit $\text{val}: R \rightarrow \mathbb{Z}$ la valuation sur l'anneau de valuation discrète $R$. On se donne $f\in K[X]$ un polynôme irréductible unitaire de degré premier $d$ tel que $f^{(d-1)}(0)=0$. Quitte à faire un changement de variable, on peut supposer que $f$ est à coefficients dans $R$. Soit $L$ un corps de rupture de $f$. L'extension $L/K$ est galoisienne de degré $d$. La théorie de Kummer montre alors qu'il existe $u \in R $ tel que $L=K(\sqrt[d]{u})$ et $\text{val}(u)\in\{0,...,d-1\}$. Soit $x\in L^{\times}$ tel que $f(x)=0$, et écrivons $x=x_0+x_1\sqrt[d]{u}+...+x_{d-1}\sqrt[d]{u}^{d-1}$ avec $x_0,...,x_{d-1}\in K$. Comme $f^{(d-1)}(0)=0$, on a $x_0=0$. Deux cas se présentent:
\begin{itemize}
\item[$\bullet$] \textit{Premier cas:} $\text{val}(u)\neq 0$. Dans ce cas, si l'on étend la valuation $\text{val}$ à $L$, on remarque que, pour $i \in \{1,...,d-1\}$, on a:
$$\text{val}(x_{i}\sqrt[d]{u}^{i}) \in \frac{i}{d}\text{val}(u) + \mathbb{Z}.$$
Comme $d$ ne divise pas $\text{val}(u)$, on déduit que les rationnels $\text{val}(x_{i}\sqrt[d]{u}^{i})$ (pour $i \in \{1,...,d-1\}$) sont deux à deux distincts. Soit $j \in \{1,...,d-1\}$ tel que $\text{val}(x_{j}\sqrt[d]{u}^{j})$ soit minimal. On a alors:
$$\text{val}(f(0))=\text{val}(N_{L/K}(x))=d\text{val}(x_j)+j\text{val}(u) \not\in d\mathbb{Z}.$$
On en déduit que $f(0) \not\in {K^{\times}}^d$.
\item[$\bullet$] \textit{Second cas:} $\text{val}(u)= 0$. Dans ce cas, la réduction de $f$ modulo $t_{m+1}$ est irréductible. Par hypothèse de récurrence, la réduction de $f(0)$ modulo $t_{m+1}$ n'est pas dans ${k((t_1))...((t_m))^{\times}}^d$. On en déduit que $f(0) \not\in {K^{\times}}^d$.
\end{itemize} 
\end{proof}

\hspace{3ex} Avant de passer au théorème principal de cette section, faisons quelques rappels sur les complexes motiviques. Dans l'article \cite{Blo}, Bloch associe à chaque corps $E$ et à chaque entier naturel $i$ un complexe de modules galoisiens noté $z^i(E,\cdot)$. Soit $\mathbb{Z}(i)$ le complexe $z^i(-,\cdot)[-2i]$. Dans la suite, nous aurons besoin notamment des résultats suivants:

\begin{theorem}\label{motiv} Soient $E$ un corps et $i$ un entier naturel.
\begin{itemize}
\item[(i)] On a $H^{i+1}(E,\mathbb{Z}(i))=0$.
\item[(ii)] On a un isomorphisme canonique $K_i^M(E) \cong H^i(E,\mathbb{Z}(i))$.
\end{itemize}
\end{theorem}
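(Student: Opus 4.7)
Le plan est de ramener les deux assertions à des résultats classiques sur les groupes de Chow supérieurs de Bloch, via l'identification standard $H^j(E,\mathbb{Z}(i)) = CH^i(E,2i-j)$ valable pour tout corps $E$ et tout entier $j$. Cette égalité est une conséquence directe de la définition $\mathbb{Z}(i)=z^i(-,\cdot)[-2i]$ rappelée juste avant l'énoncé du théorème.

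Pour l'assertion (i), il suffira alors de montrer que $CH^i(E,i-1)=0$. Ceci découle immédiatement d'un argument dimensionnel : le groupe $z^i(E,n)$ est défini comme engendré par les cycles intègres de codimension $i$ dans le simplexe $\Delta^n_E$, lequel est de dimension $n$ au-dessus de $E$. Pour $n<i$, de tels cycles devraient avoir une dimension strictement négative, ce qui est absurde. On a donc $z^i(E,n)=0$ pour tout $n<i$, et a fortiori $CH^i(E,n)=0$ dans cette plage. En spécialisant à $n=i-1$, on obtient précisément l'annulation voulue.

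Pour l'assertion (ii), la stratégie est de renvoyer au théorème fondamental de Nesterenko-Suslin (également démontré indépendamment par Totaro) qui affirme l'existence d'un isomorphisme canonique $CH^i(E,i)\cong K_i^M(E)$. Combiné avec l'identification rappelée ci-dessus pour $j=i$, cela donne immédiatement l'isomorphisme souhaité $H^i(E,\mathbb{Z}(i))\cong K_i^M(E)$.

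Le principal point délicat, si l'on prend au sérieux le fait que $z^i(E,\cdot)$ soit ici vu comme un complexe de modules galoisiens et $H^j(E,\mathbb{Z}(i))$ comme l'hypercohomologie galoisienne correspondante, est la compatibilité de ces identifications avec la structure galoisienne. Sur un corps cela ne présente toutefois aucune difficulté : la suite spectrale d'hypercohomologie se réduit essentiellement à la cohomologie du complexe de cycles du corps lui-même, et l'on retrouve bien les deux énoncés standards cités plus haut. Aucun argument réellement nouveau n'est donc requis : la preuve consiste essentiellement en un renvoi aux résultats classiques de la cohomologie motivique.
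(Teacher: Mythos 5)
Votre démonstration repose dès la première ligne sur l'identification $H^j(E,\mathbb{Z}(i))=CH^i(E,2i-j)$, que vous présentez comme « une conséquence directe de la définition ». C'est là que se trouve la lacune : dans ce texte, $z^i(E,\cdot)$ est un complexe de \emph{modules galoisiens} (c'est-à-dire $z^i(\overline{E},\cdot)$ muni de son action de Galois, limite inductive sur les sous-extensions finies), et $H^j(E,\mathbb{Z}(i))$ désigne l'hypercohomologie \emph{galoisienne} de ce complexe, et non la cohomologie du complexe de cycles sur $E$ lui-même. Ces deux objets ne coïncident pas en général : pour $i=1$ on a un quasi-isomorphisme $\mathbb{Z}(1)\simeq \overline{E}^{\times}[-1]$, de sorte que $H^{3}(E,\mathbb{Z}(1))\cong H^{2}(E,\mathbb{G}_m)=\text{Br}(E)$ est en général non nul, alors que $CH^{1}(E,-1)=0$. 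La suite spectrale d'hypercohomologie $H^p(\text{Gal}(\overline{E}/E),H^q(z^i(\overline{E},\cdot)[-2i]))\Rightarrow H^{p+q}(E,\mathbb{Z}(i))$ ne « se réduit » donc nullement à la cohomologie du complexe de cycles sur $E$ : ses termes avec $p>0$ contribuent de façon essentielle, et votre dernier paragraphe, qui écarte ce point comme ne présentant « aucune difficulté », est précisément l'endroit où l'argument s'effondre.

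L'ingrédient manquant est la conjecture de Beilinson--Lichtenbaum (théorème de Rost, Voevodsky et al.), qui affirme que le morphisme naturel $CH^i(E,2i-j)=H^j_{M}(E,\mathbb{Z}(i))\rightarrow H^j(E,\mathbb{Z}(i))$ de la cohomologie motivique vers l'hypercohomologie galoisienne est un isomorphisme pour $j\leq i+1$ --- et c'est uniquement dans cette plage de degrés que l'identification que vous utilisez est vraie. Combinée avec l'annulation $CH^i(E,i-1)=0$ (votre argument dimensionnel, qui est correct du côté motivique) pour le point (i), et avec le théorème de Nesterenko--Suslin--Totaro $CH^i(E,i)\cong K_i^M(E)$ pour le point (ii), elle donne bien le théorème : c'est exactement la démonstration de l'article, qui renvoie aux points (v) et (vii) du théorème 0.3 de \cite{Izq1}. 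Notez d'ailleurs que déjà pour $i=1$ l'assertion (i) est le théorème 90 de Hilbert, et non un comptage de dimensions : cela montre que le passage du côté motivique au côté galoisien est le cœur (profond) de l'énoncé, et non une simple vérification de compatibilité.
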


Ces résultats découlent du théorème de Nesterenko-Suslin (théorème 0.3(v) de \cite{Izq1}) et de la conjecture de Beilinson-Lichtenbaum (théorème 0.3(vii) de \cite{Izq1}). Ils impliquent notamment la proposition suivante:

\begin{proposition}\label{normmot}
Soit $E$ un corps de caractéristique nulle. Soient $E_1,...,E_n$ des extensions finies de $E$. Soit $F=E_1\times ... \times E_n$. Soit $\check{T}$ le module des cocaractères du tore $T=R^1_{F/E}(\mathbb{G}_m)$. Pour $i \geq 0$, on a un isomorphisme canonique:
$$H^{i+1}(E,\check{T} \otimes \mathbb{Z}(i)) \cong \frac{K_i^M(E)}{\prod_{j=1}^n N_{E_j/E}(K_i^M(E_j))}.$$
\end{proposition}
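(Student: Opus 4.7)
Le plan naturel consiste à partir de la suite exacte de tores
$$1 \to T \to R_{F/E}(\mathbb{G}_m) \xrightarrow{N} \mathbb{G}_m \to 1$$
qui définit $T=R^1_{F/E}(\mathbb{G}_m)$, et à en déduire par passage aux modules des cocaractères la suite exacte de modules galoisiens
$$0 \to \check{T} \to \bigoplus_{j=1}^n \mathbb{Z}[G_E/G_{E_j}] \xrightarrow{\varepsilon} \mathbb{Z} \to 0,$$
où $G_E$ désigne le groupe de Galois absolu de $E$ et $\varepsilon$ la somme des morphismes d'augmentation, via l'identification classique du module des cocaractères de $R_{E_j/E}(\mathbb{G}_m)$ avec $\mathbb{Z}[G_E/G_{E_j}]$.

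On tensoriserait ensuite cette suite avec le complexe motivique $\mathbb{Z}(i)$ et on écrirait la suite exacte longue de cohomologie galoisienne qui en résulte. Le lemme de Shapiro fournit les isomorphismes $H^k(E,\mathbb{Z}[G_E/G_{E_j}] \otimes \mathbb{Z}(i)) \cong H^k(E_j,\mathbb{Z}(i))$, les flèches induites vers $H^k(E,\mathbb{Z}(i))$ s'identifiant aux corestrictions galoisiennes. Le théorème \ref{motiv}(i) assure l'annulation $H^{i+1}(E,\mathbb{Z}(i)) = H^{i+1}(E_j,\mathbb{Z}(i)) = 0$ pour tout $j$, de sorte que la suite longue se réduit à la suite exacte courte
$$\bigoplus_{j=1}^n H^i(E_j,\mathbb{Z}(i)) \longrightarrow H^i(E,\mathbb{Z}(i)) \longrightarrow H^{i+1}(E, \check{T} \otimes \mathbb{Z}(i)) \longrightarrow 0.$$
Le théorème \ref{motiv}(ii) permet enfin de remplacer $H^i(E_j,\mathbb{Z}(i))$ par $K_i^M(E_j)$ et $H^i(E,\mathbb{Z}(i))$ par $K_i^M(E)$, ce qui produirait l'isomorphisme annoncé.

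Le point technique principal consiste à vérifier que, sous l'isomorphisme de Nesterenko-Suslin, la corestriction galoisienne s'identifie bien à la norme de Kato : c'est ce qui garantit que la flèche de gauche dans la suite précédente coïncide avec la somme des applications $N_{E_j/E}:K_i^M(E_j) \to K_i^M(E)$, et donc que le conoyau est précisément $K_i^M(E)/\prod_j N_{E_j/E}(K_i^M(E_j))$. Cette compatibilité est une propriété standard du complexe motivique, à invoquer depuis la littérature (par exemple \cite{Izq1}); c'est l'étape la plus subtile du raisonnement, même si elle est classique. Le reste du plan n'est qu'une dévissage formel à partir de la suite exacte de tores définissant $T$.
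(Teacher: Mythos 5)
Votre démonstration est correcte et suit essentiellement la même voie que celle de l'article : la suite exacte de modules de cocaractères $0 \to \check{T} \to \bigoplus_j \mathbb{Z}[G_E/G_{E_j}] \to \mathbb{Z} \to 0$ tensorisée par $\mathbb{Z}(i)$ est exactement le triangle distingué utilisé dans le texte, et les étapes suivantes (Shapiro, annulation de $H^{i+1}(-,\mathbb{Z}(i))$, identification de $H^i(-,\mathbb{Z}(i))$ avec $K_i^M$ et de la corestriction avec la norme) coïncident. Rien à redire.
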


\begin{proof}
Le triangle distingué:
$$\check{T} \otimes \mathbb{Z}(i) \rightarrow \left( \bigoplus_{j=1}^n \mathbb{Z}[\text{Gal}(\overline{E}/E)/\text{Gal}(\overline{E}/E_j)]\right) \otimes \mathbb{Z}(i) \rightarrow \mathbb{Z}(i) \rightarrow \check{T} \otimes \mathbb{Z}(i)[1] $$
induit une suite exacte:
$$\bigoplus_{j=1}^n H^i(E_j,\mathbb{Z}(i)) \rightarrow H^i(E,\mathbb{Z}(i)) \rightarrow H^{i+1}(E,\check{T} \otimes \mathbb{Z}(i)) \rightarrow \bigoplus_{j=1}^n H^{i+1}(E_j,\mathbb{Z}(i)), $$
où la première flèche est donnée par la corestriction. En tenant compte du théorème \ref{motiv}, cette suite exacte s'identifie à la suite exacte:
$$\bigoplus_{j=1}^n K_i^M(E_j) \rightarrow K_i^M(E) \rightarrow H^{i+1}(E,\check{T} \otimes \mathbb{Z}(i)) \rightarrow 0, $$
où la première flèche est donnée par la norme.
\end{proof}

\hspace{3ex} Nous sommes maintenant en mesure d'énoncer et d'établir le théorème suivant:

\begin{theorem}\label{diff}
Soient $k$ un corps algébriquement clos de caractéristique nulle et $m, i, j$ des entiers naturels tels que $i+j=m+1$. Soient $L$ le corps des fonctions d'une courbe projective lisse géométriquement intègre sur $k((t_1))...((t_m))$ ou encore une extension finie de $k((t_1))...((t_{m-1}))((x,y))$. Soit $Z$ une hypersurface de degré $d$ dans $\mathbb{P}^n_L$ avec $d^i \leq n$ et $d$ premier. Alors:
\begin{itemize}
\item[(i)] Il existe des extensions finies $L_1$, ..., $L_r$ de $L$ telles que $Z(L_s)\neq \emptyset$ pour chaque $s$ et $N_j(Z/L) = \prod_{s=1}^r N_{L_s/L}(K_j^M(L_s))$.
\item[(ii)] Le groupe $K_j^M(L)/N_j(Z/L)$ est fini.
\end{itemize}
\end{theorem}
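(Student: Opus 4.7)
The plan is to combine three ingredients: the cohomological reinterpretation of norm quotients furnished by Proposition \ref{normmot}, the ramification obstruction given by Lemma \ref{ramif}, and the Poitou--Tate style finiteness theorems for $L$ developed in \cite{Izq1} and \cite{Izq2}.

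\textbf{Producing local points.} First I would construct a suitable regular proper two-dimensional model $\mathcal{C}$ of $L$: a semistable model over $\mathrm{Spec}\, k((t_1))\cdots((t_{m-1}))[[t_m]]$ in the curve case, or a desingularised model of the normalisation of $\mathrm{Spec}\, k((t_1))\cdots((t_{m-1}))[[x,y]]$ in $L$ in the second case, with special fibre a strict normal crossings divisor (cf.\ the final remark of Section 4). At each codimension-one point $v$ of $\mathcal{C}$, the completion $L_v$ is a complete discretely valued field whose residue field is the function field of a variety over $k((t_1))\cdots((t_{m'}))$ with $m'\le m-1$; by Theorems \ref{tresglobal} and \ref{c(x)((t))} applied to the residue field, together with a Hensel/Greenberg lifting (in the spirit of Corollary \ref{infty}), $Z$ acquires a point in a finite extension of $L_v$. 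A density argument yields a \emph{finite} family of global finite extensions $L_1,\dots,L_r$ of $L$ with $Z(L_s)\ne\emptyset$, whose local components realise the required local points at every $v$.

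\textbf{Cohomological reduction for (i).} Set $E=L_1\times\cdots\times L_r$ and let $T=R^1_{E/L}(\mathbb{G}_m)$ be the corresponding norm torus, with cocharacter module $\check{T}$. By Proposition \ref{normmot} applied with $i=j$,
$$H^{j+1}(L,\check{T}\otimes\mathbb{Z}(j)) \cong \frac{K_j^M(L)}{\prod_{s=1}^r N_{L_s/L}(K_j^M(L_s))}.$$
To obtain (i) it is enough to show that for every finite $L'/L$ with $Z(L')\ne\emptyset$ one has $N_{L'/L}(K_j^M(L'))\subseteq\prod_s N_{L_s/L}(K_j^M(L_s))$. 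Decomposing $L'/L$ into a tower of prime-degree extensions (only the layers of degree $d$ being relevant, since the $C_0^j$ property handles the others) and applying Lemma \ref{ramif} at each step: either the layer embeds into the compositum with one of the $L_s$ already chosen, or the lemma forces the relevant constant term to fail to be a $d$-th power in a specific ramified local completion, contradicting the local picture built at Step~1.

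\textbf{Finiteness for (ii).} Once the quotient is identified with $H^{j+1}(L,\check{T}\otimes\mathbb{Z}(j))$, I invoke the Poitou--Tate type exact sequence for $L$ with motivic coefficients, as established in \cite{Izq1} for function fields of curves over higher-dimensional local fields and in \cite{Izq2} for extensions of $k((x,y))$. It gives a complex
$$H^{j+1}(L,\check{T}\otimes\mathbb{Z}(j)) \longrightarrow \bigoplus_v H^{j+1}(L_v,\check{T}\otimes\mathbb{Z}(j)) \longrightarrow (\text{finite Pontryagin dual}),$$
with kernel a finite Shafarevich--Tate group $\Sha^{j+1}(L,\check{T}\otimes\mathbb{Z}(j))$. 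The local existence of points established in Step~1 makes the local components vanish outside a finite set of places; at the remaining places, $C_i^j$ for the residue fields (Theorems \ref{tresglobal} and \ref{c(x)((t))}) and the structure of higher-dimensional local fields force each local quotient $K_j^M(L_v)/N_j(Z_v/L_v)$ to be finite. Assembling these contributions along the sequence gives the finiteness of $K_j^M(L)/N_j(Z/L)$.

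\textbf{Expected main obstacle.} The delicate point is Step~2: reducing an arbitrary norm $N_{L'/L}(K_j^M(L'))$, for $L'$ as in the definition of $N_j(Z/L)$, to the finite list $\prod_s N_{L_s/L}(K_j^M(L_s))$. This is precisely where the primality of $d$ is used: only for $d$ prime does Lemma \ref{ramif} give a usable obstruction through Kummer theory on a single layer, and it is not clear how to dispense with this hypothesis. Controlling the interaction between the ramification filtration on $L'/L$ and the local points produced in Step~1 is the main technical task.
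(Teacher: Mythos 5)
Your overall architecture (choose explicit extensions with points on $Z$, reinterpret the norm quotient as $H^{j+1}(L,\check T\otimes\mathbb{Z}(j))$ via Proposition \ref{normmot}, and conclude by finiteness of $\Sha^{j+1}(L,\check T\otimes\mathbb{Z}(j))$ from \cite{Izq1} and \cite{Izq2}) matches the paper's, but the central step is missing. To identify the global quotient with $\Sha^{j+1}$ you need the local groups $H^{j+1}(L_v,\check T\otimes\mathbb{Z}(j))=K_j^M(L_v)/\prod_s N_{(L_s\otimes_L L_v)/L_v}(K_j^M(\cdot))$ to \emph{vanish} at every $v\in C^{(1)}$. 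The mere existence of points of $Z$ in finite extensions of $L_v$ (your Step 1) gives nothing of the sort, and your fallback at the "remaining places" fails twice over: finiteness of $K_j^M(L_v)/N_j(Z_v/L_v)$ is not the same as control of $H^{j+1}(L_v,\check T\otimes\mathbb{Z}(j))$ for your particular $T$, and the completions $L_v$ are iterated Laurent series fields of the form $k'((t_1))\cdots((t_m))((u))$, for which the $C_i^j$ property is exactly what is open (Question \ref{q2}); Theorems \ref{tresglobal} and \ref{c(x)((t))} do not apply to them. Likewise, your Step 2 — reducing every $N_{L'/L}(K_j^M(L'))$ to the fixed product by peeling off prime-degree layers and invoking Lemma \ref{ramif} — is not a workable argument as stated; in the paper, assertion (i) is instead a formal consequence of (ii) once a fixed finite product of norm groups is shown to have finite index in $K_j^M(L)$.

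The paper's actual mechanism, which your proposal does not contain, is entirely explicit and local. The extensions are the rupture fields $L_{\alpha\beta}$ of the restrictions $f_{\alpha\beta}(X)=f(0,\dots,X,\dots,1,\dots,0)$ of the defining form to the coordinate lines (after a linear change of variables ensuring $f_{\alpha\beta}^{(d-1)}(0)=0$). At each $v$, one works in the $\mathbb{Z}/d$-vector space $L_v^{\times}/(L_v^{\times})^{d}\cong(\mathbb{Z}/d)^{m+1}$: Lemma \ref{ramif} shows the coefficients $c_{\alpha}$ of $X_{\alpha}^{d}$ have pairwise distinct classes, so, since $n\ge d^{i}$, the ratios $c_{\alpha}c_{\beta}^{-1}$ — which are norms from $M_{\alpha\beta}$ because $\pm c_{\alpha}c_{\beta}^{-1}$ is the product of the roots of $f_{\alpha\beta}$ — span a subspace $W'$ of dimension at least $i+1$. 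A symbol $\{u_1,\dots,u_j\}$ with $\mathbb{Z}/d$-dependent entries is $d$-divisible (hence a norm); one with independent entries spans a $W$ of dimension $j$, and $j+(i+1)>m+1$ forces $W\cap W'\neq 0$, which lets one write a power $\{u_1,\dots,u_j\}^{\delta_j}$ with $d\nmid\delta_j$ as a product of norms. This is where the primality of $d$ and Lemma \ref{ramif} are really used; your proposed use of that lemma (a Kummer-theoretic obstruction on a single layer of a tower, contradicting local solvability) does not reconstruct this dimension count, and without it the proof does not go through.
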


\begin{notation} ~
\begin{itemize}
\item[$\bullet$] Si $L$ est le corps des fonctions d'une courbe projective lisse géométriquement intègre sur $k((t_1))...((t_m))$, on notera $C$ une courbe projective lisse géométriquement intègre sur $k((t_1))...((t_m))$ telle que $L=k((t_1))...((t_m))(C)$. Si $L$ est une extension finie de $k((t_1))...((t_{m-1}))((x,y))$, on notera $C$ le spectre de la clôture intégrale de $k((t_1))...((t_{m-1}))[[x,y]]$ dans $L$.
\item[$\bullet$] On notera $C^{(1)}$ l'ensemble des points de codimension 1 de $C$. Pour $v\in C^{(1)}$, la notation $L_v$ désignera le complété de $L$ en $v$. Finalement, lorsque $M$ est un module galoisien (ou un complexe de modules galoisiens) sur $L$, on notera:
$$\Sha^1(L,M) := \text{Ker} \left( H^1(L,M) \rightarrow \prod_{v\in C^{(1)}} H^1(L_v,M) \right) .$$
\end{itemize}
\end{notation}

\begin{proof}
Soit $f \in L[X_0,...,X_n]$ un polynôme homogène de degré $d$ tel que $Z$ est l'hypersurface définie par l'équation $f=0$. Pour $0 \leq \alpha < \beta \leq n$, considérons le polynôme:
$$f_{\alpha  \beta}(X) = f(0,...,0,X,0,...,0,1,0,...,0)$$
où $X$ est à la $\alpha$-ème place et $1$ à la $\beta$-ème place.\\

Si $Z$ a un 0-cycle de degré 1, il n'y a rien à démontrer. On suppose donc que $Z$ n'a pas de 0-cycles de degré 1. Dans ce cas, les $f_{\alpha  \beta}$ sont tous irréductibles de degré $d$, et un changement de variable linéaire permet de supposer aussi que $f_{\alpha  \beta}^{(d-1)}(0)=0$ pour chaque $\alpha<\beta$.\\

Notons maintenant $L_{\alpha  \beta}$ le corps de rupture de $f_{\alpha  \beta}$ et fixons $v\in C^{(1)}$. Décomposons chaque $f_{\alpha  \beta}$ en produit d'irréductibles sur $L_v$:
$$f_{\alpha  \beta} = \prod_{\gamma =1}^{r_{\alpha  \beta}} f_{\alpha  \beta}^{(\gamma)}.$$
Notons $M_{\alpha  \beta}^{(\gamma)}$ le corps de rupture de $f_{\alpha  \beta}^{(\gamma)}$ sur $L_v$ et montrons que:
\begin{equation}\label{wre}
K_j^M(L_v) = \prod_{\alpha, \beta, \gamma} N_{M_{\alpha  \beta}^{(\gamma)}/L_v}\left(K_j^M(M_{\alpha  \beta}^{(\gamma)})\right).
\end{equation}

Si l'un des $f_{\alpha  \beta}$ n'est pas irréductible sur $L_v$, les degrés $[M_{\alpha  \beta}^{(\gamma)}:L_v]$ sont premiers entre eux dans leur ensemble, et donc l'égalité (\ref{wre}) est évidente. On peut donc supposer que chaque $f_{\alpha  \beta}$ est irréductible sur $L_v$. On notera alors $M_{\alpha  \beta}$ au lieu de $M_{\alpha  \beta}^{(1)}$. \\

Donnons-nous $u_1,...,u_j$ des éléments de $L_v$ et montrons que le symbole $\{u_1,...,u_j\} \in K_j^M(L_v)$ appartient à $\prod_{\alpha, \beta} N_{M_{\alpha  \beta}/L_v}\left(K_j^M(M_{\alpha  \beta})\right)$. Deux cas se présentent:
\begin{itemize}
\item[$\bullet$] \textit{Premier cas:} les classes de $u_1,...,u_j$ dans le $\mathbb{Z}/d\mathbb{Z}$-espace vectoriel $L_v^{\times}/{L_v^{\times}}^d$ sont linéairement liées. Sans perte de généralité, on peut supposer que:
$$u_j = u_1^{\delta_1}...u_{j-1}^{\delta_{j-1}} u^d$$
pour certains $\delta_1,...\delta_{j-1}\in \mathbb{Z}$ et $u \in L_v$. On a alors:
\begin{align*}
\{u_1,...,u_j\} &= \{u_1,...,u_{j-1},u\}^d\prod_{s=1}^{j-1} \{u_1,...,u_{j-1},u_s\}^{\delta_s}\\
&= \{u_1,...,u_{j-1},u\}^d \prod_{s=1}^{j-1} \left( \{u_1,...,u_{j-1},-u_s\}\{u_1,...,u_{j-1},-1\} \right) ^{\delta_s}\\
&= \left(\{u_1,...,u_{j-1},u\}\prod_{s=1}^{j-1} \{u_1,...,u_{j-1},\zeta_{2d}\}  ^{\delta_s}\right)^d,
\end{align*}
où $\zeta_{2d}$ désigne une racine primitive $2d$-ième de l'unité. On en déduit que le symbole $\{u_1,...,u_j\} \in K_j^M(L_v)$ est $d$-divisible et donc qu'il appartient bien à $\prod_{\alpha, \beta} N_{M_{\alpha  \beta}/L_v}\left(K_j^M(M_{\alpha  \beta})\right)$. 
\item[$\bullet$] \textit{Second cas:} les classes de $u_1,...,u_j$ dans le $\mathbb{Z}/d\mathbb{Z}$-espace vectoriel $L_v^{\times}/{L_v^{\times}}^d$ sont linéairement indépendantes. Dans ce cas, le sous-$\mathbb{Z}/d\mathbb{Z}$-espace vectoriel $W$ de $L_v^{\times}/{L_v^{\times}}^d$ engendré par les classes de $u_1,...,u_j$ est de dimension $j$. Pour $0 \leq \alpha \leq n$, notons $c_{\alpha}$ le coefficient de $X_{\alpha}^p$ dans $f$. Pour chaque $\alpha$ et $\beta$ tels que $\alpha<\beta$, il existe $x_{\alpha  \beta} \in M_{\alpha  \beta}^{\times}$ tel que $c_{\alpha}c_{\beta}^{-1} = N_{M_{\alpha  \beta}/L_v}(x_{\alpha  \beta})$. De plus, d'après le lemme \ref{ramif}, les classes des $c_{\alpha}$ dans $L_v^{\times}/{L_v^{\times}}^d$ sont deux à deux distinctes. Comme $n \geq d^i$, cela montre que le sous-$\mathbb{Z}/d\mathbb{Z}$-espace vectoriel $W'$ de $L_v^{\times}/{L_v^{\times}}^d$ engendré par les $c_{\alpha}c_{\beta}^{-1}$ (avec $\alpha < \beta$) est de dimension au moins $i+1=m+2-j$. On a donc: $$\dim_{\mathbb{Z}/d\mathbb{Z}} W + \dim_{\mathbb{Z}/d\mathbb{Z}} W' \geq j+(m+2-j) = m+2 > m+1= \dim_{\mathbb{Z}/d\mathbb{Z}} L_v^{\times}/{L_v^{\times}}^d,$$
d'où $\dim_{\mathbb{Z}/d\mathbb{Z}} (W \cap W') > 0$. Soit $w$ un élément de $L_v^{\times}$ dont la classe modulo ${L_v^{\times}}^d$ est un élément non trivial de $W \cap W'$. Écrivons: $$w=u_1^{\delta_1}...u_j^{\delta_j}u^d=\left( \prod_{\alpha < \beta}(c_{\alpha}c_{\beta}^{-1})^{\epsilon_{\alpha \beta}}\right) u'^d$$ avec $(\delta_1,...,\delta_j)\in \mathbb{Z}^j \setminus (d \mathbb{Z})^j$, $\epsilon_{\alpha \beta} \in \mathbb{Z}$ pour chaque $\alpha < \beta$ et $u,u' \in L_v^{\times}$. Sans perte de généralité, on peut supposer que $\delta_j \not\in d\mathbb{Z}$. Comme $c_{\alpha}c_{\beta}^{-1} = N_{M_{\alpha  \beta}/L_v}(x_{\alpha  \beta})$ pour chaque $\alpha < \beta$, on a:
\begin{align*}
\{u_1,...,u_j\}^{\delta_j} & = \{u_1,...,u_{j-1},w\} \{u_1,...,u_{j-1},w^{-1}u_j^{\delta_j}\}\\
& =\left(  \prod_{\alpha < \beta} \{u_1,...,u_{j-1},c_{\alpha}c_{\beta}^{-1}\}^{\epsilon_{\alpha \beta}}\right)  \{u_1,...,u_{j-1},w^{-1}u_j^{\delta_j}u'^d\}\\
&=\left(  \prod_{\alpha < \beta} N_{M_{\alpha  \beta}/L_v}(\{u_1,...,u_{j-1},x_{\alpha \beta}\})^{\epsilon_{\alpha \beta}}\right)  \{u_1,...,u_{j-1},w^{-1}u_j^{\delta_j}u'^d\}.
\end{align*} 
De plus, d'après le premier cas:
$$\{u_1,...,u_{j-1},w^{-1}u_j^{\delta_j}u'^d\} \in \prod_{\alpha < \beta} N_{M_{\alpha  \beta}/L_v}\left(K_j^M(M_{\alpha  \beta})\right).$$
Comme $d$ ne divise pas $\delta_j$, cela prouve que:
$$\{u_1,...,u_j\} \in \prod_{\alpha < \beta} N_{M_{\alpha  \beta}/L_v}\left(K_j^M(M_{\alpha  \beta})\right).$$
\end{itemize}
Nous avons donc montré l'égalité (\ref{wre}).\\

Or, d'après la proposition \ref{normmot}, cette égalité implique que le groupe $K_j^M(L)/N_j(Z/L)$ est un quotient de $$\Sha^{j+1}(L,\check{T} \otimes \mathbb{Z}(j)),$$ où $\check{T}$ désigne le module des cocaractères du tore $R^{1}_{\prod_{\alpha, \beta} L_{\alpha  \beta}/L}(\mathbb{G}_m)$. Le théorème découle alors de la finitude du groupe $\Sha^{j+1}(L,\check{T} \otimes \mathbb{Z}(j))$ (théorème 3.10 de \cite{Izq1} et variante de la remarque 2.9 de \cite{Izq2}).
\end{proof}

\begin{remarque}\label{rqdiff}
Le théorème s'applique aux extensions finies de $\mathbb{C}((x))(y)$ et aux extensions finies de $\mathbb{C}((x,y))$ en prenant $i=j=1$.
\end{remarque}

\section{Questions}

\hspace{3ex} Dans ce paragraphe, nous soulevons un certain nombre de questions qui se posent naturellement à partir des résultats des sections précédentes. Nous ne prétendons pas que leurs réponses soient affirmatives.

\subsection{Corps locaux}

\begin{question}\label{q1}
Soient $K$ un corps $p$-adique et $t$ une uniformisante de $K$. Soit $L= \bigcup_{q \geq 0} K(t^{1/q})$. Peut-on utiliser le corollaire \ref{infty} pour démontrer que $L$ est $C_{1}$?
\end{question}

\begin{remarque}~
\begin{itemize}
\item[$\bullet$] Le corps $L$ est bien de dimension cohomologique 1.
\item[$\bullet$] Dans le cas $K=\mathbb{Q}_p$ et $t=p$, il s'agit de démontrer que, si $f$ est un polynôme homogène à $n+1$ variables de degré $d$ avec $d \leq n$ à coefficients dans $\mathbb{Z}$, alors pour tout $\nu \geq 0$, la congruence $f(x_0,...,x_n)\equiv 0 \mod p^{\nu}$ a des solutions dans $\bigcup_{q\geq 1}\mathbb{Z}[p^{1/q}]$ telles que les $x_i$ ne sont pas tous divisibles par une puissance de $p$.
\item[$\bullet$] Une réponse affirmative à la question \ref{q1} permettrait de retrouver la propriété $C_1^1$ pour les corps $p$-adiques. Pour ce faire, il suffirait d'ailleurs de montrer que $L$ est $C_1^0$.
\end{itemize}
\end{remarque}

\begin{question}\label{q2}
Soient $n\geq 1$ un entier naturel et $J$ une partie de $\{1,...,n\}$. Considérons le corps:
$$K_J=\bigcup_{(a_j)_{j \in J} \in \mathbb{N}^J} \mathbb{C}((x_1))...((x_n))(x_j^{1/a_j}, j \in J).$$ Peut-on utiliser le corollaire \ref{infty} pour démontrer que $K_J$ est $C_{n-|J|}$?
\end{question}

\begin{remarque}~
\begin{itemize}
\item[$\bullet$] Le corps $K_J$ est de dimension cohomologique $n-|J|$, puisque son groupe de Galois absolu est isomorphe à $\widehat{\mathbb{Z}}^{n-|J|}$.
\item[$\bullet$] En s'inspirant de la preuve du théorème \ref{c(x)((t))} et en vertu du corollaire \ref{infty}, pour répondre à la question \ref{q2}, il suffirait de démontrer que $K_J$ contient un sous-corps $C_{n-|J|}$ contenant $\mathbb{C}((x_1))...((x_{n-1}))(x_j^{1/a_j}, j \in J\setminus \{n\})(x_n)$. Par exemple, pour montrer que $K_{\{n\}}$ est $C_{n-1}$, il suffirait de montrer qu'il contient un sous-corps $C_{n-1}$ contenant $\mathbb{C}((x_1))...((x_{n-1}))(x_n)$.
\item[$\bullet$] Une réponse affirmative à la question \ref{q2} impliquerait que le corps $\mathbb{C}((x_1))...((x_n))$ est $C_i^j$ pour tous $i \geq 0$ et $j\geq 0$ tels que $i+j=n$. Pour ce faire, il suffirait d'ailleurs de démontrer la propriété $C_{n-|J|}^0$ pour $K_J$.
\end{itemize}
\end{remarque}

\begin{question}\label{q3}
Avec les notations du théorème \ref{thinfty}, si le corps résiduel de $K$ est $C_i$ peut-on déduire que $K_{\infty}$ est $C_i$? 
\end{question}

\begin{remarque}~
\begin{itemize}
\item[$\bullet$] D'après le corollaire \ref{infty}, si le corps résiduel $k$ de $K$ est de caractéristique 0, il s'agit de démontrer que, si $f$ est un polynôme homogène à $n+1$ variables de degré $d$ avec $d^i \leq n$ à coefficients dans $k[[t]]$, alors pour tout $\nu \geq 1$, la congruence $f(x_1,...,x_n) \equiv 0 \mod t^{\nu}$ a une solution dans $\bigcup_{q\geq 1} k[[t^{1/q}]]$ telle qu'aucune puissance de $t$ divise tous les $x_i$.
\item[$\bullet$] Toujours en tenant compte du corollaire \ref{infty}, pour répondre à la question \ref{q3}, il suffit de démontrer la propriété $C_i$ pour les hypersurfaces \textit{lisses} sur $K_{\infty}$, auquel cas il est possible d'utiliser le lemme 7.5 de \cite{Wit}. Cette remarque s'applique d'ailleurs aussi aux questions \ref{q1} et \ref{q2}.
\item[$\bullet$] Une réponse affirmative à la question \ref{q3} impliquerait des réponses affirmatives aux questions \ref{q1} et \ref{q2}.
\end{itemize}
\end{remarque}

\begin{question}
Avec les notations du corollaire \ref{infty}, si $K$ est $C_i^0$ peut-on déduire que $K_{\infty}$ est $C_i^0$?
\end{question}

\subsection{Corps globaux}

\begin{question}
L'énoncé du théorème \ref{diff} reste-t-il vrai lorsque l'hypersurface considérée n'est plus de degré premier?
\end{question}

\begin{question}\label{que}
Soit $L$ le corps des fonctions d'une courbe projective lisse géométriquement intègre $C$ sur $\mathbb{C}((t))$. Soit $\mathcal{C}$ un modèle propre régulier de $C$ sur $\mathbb{C}[[t]]$ dont la fibre spéciale est un diviseur à croisements normaux stricts. Soient $L_1,...,L_n$ des extensions finies de $L$. Soit $L'$ une extension finie de $L$ linéairement disjointe avec l'extension composée $L_1...L_n$. Soient $E= L' \times \prod_{i=1}^n L_i$ et $T=R^{1}_{E/K}(\mathbb{G}_m)$. Le groupe:
$$\Sha^1_{\mathcal{C}}(L,T) := \text{Ker} \left( H^1(L,T) \rightarrow \prod_{v \in \mathcal{C}^{(1)}} H^1(L_v,T) \right) $$
est-il forcément nul?
\end{question}

\begin{question}
Même question que \ref{que} en prenant pour $L$ une extension finie de $\mathbb{C}((x,y))$, pour $C$ le spectre de la clôture intégrale de $\mathbb{C}[[x,y]]$ dans $L$, et pour $\mathcal{C}$ une résolution des singularités de $C$ dont la fibre spéciale est un diviseur à croisements normaux stricts.
\end{question}

\begin{remarque}~
\begin{itemize}
\item[$\bullet$] Les deux questions précédentes demandent si un résultat analogue au théorème 1 de \cite{DW} reste valable sur $\mathbb{C}((t))(x)$ ou sur $\mathbb{C}((x,y))$. Pour pouvoir utiliser les mêmes méthodes que \cite{DW}, on aurait besoin de montrer l'annulation de:
$$\Sha^1(L,T) := \text{Ker} \left( H^1(L,T) \rightarrow \prod_{v \in C^{(1)}} H^1(K_v,T) \right) $$
puisqu'il est connu que le groupe $\Sha^1(L,T)$ vérifie un théorème de dualité à la Poitou-Tate (alors que ce n'est pas connu pour $\Sha^1_{\mathcal{C}}(L,T)$). Mais la preuve du théorème 1 de \cite{DW} utilise fortement le théorème de Chebotarev, qui tombe en défaut sur $\mathbb{C}((t))(x)$ et sur $\mathbb{C}((x,y))$: en effet, le groupe
$$\Sha^2(L,\mathbb{Z}) := \text{Ker} \left( H^2(L,\mathbb{Z}) \rightarrow \prod_{v \in C^{(1)}} H^2(K_v,\mathbb{Z}) \right) $$
peut être non nul. Il n'est donc pas possible d'adapter immédiatement la preuve de \cite{DW}.
\item[$\bullet$] Une réponse affirmative aux deux questions précédentes montrerait que $\mathbb{C}((t))(x)$ et $\mathbb{C}((x,y))$ vérifient la propriété $C^1_1$: il suffirait de procéder exactement comme dans la section \ref{s1}, à condition d'utiliser que les corps $\mathbb{C}((x))((y))$ et $\mathbb{C}(x)((y))$ sont $C_1^1$ (voir la section 4 de \cite{Wit} et la section \ref{lolo} du présent article).
\end{itemize}
\end{remarque}

\begin{question}
Soient $K=\mathbb{C}((x,y))$ et $L$ le corps obtenu à partir de $K$ en adjoignant toutes les racines de $x$. Le corps $L$ est $C_1$?
\end{question}

\begin{remarque}~
\begin{itemize}
\item[$\bullet$] Il serait déjà intéressant de savoir si $L$ est de dimension cohomologique 1.
\item[$\bullet$] Si la réponse à la question précédente était affirmative, il serait possible de montrer que $\mathbb{C}((x,y))$ est $C_1^1$ par un argument similaire à celui du théorème \ref{tresglobal}.
\end{itemize}
\end{remarque}

\end{document}